\numberwithin{equation}{section}
\tikzstyle arrowstyle=[scale=1]
\tikzstyle directed=[postaction={decorate,decoration={markings,
    mark=at position .5 with {\arrow[arrowstyle]{stealth}}}}]
\tikzstyle reverse directed=[postaction={decorate,decoration={markings,
    mark=at position .65 with {\arrowreversed[arrowstyle]{stealth};}}}]
\numberwithin{equation}{section}
\title{Second Mod $2$ Homology of Artin Groups}
\author{Toshiyuki Akita}
\address{Department of Mathematics, Hokkaido University, North 10,West 8, Kita-ku, Sapporo 060-0810, JAPAN}
\email{akita@math.sci.hokudai.ac.jp}
\urladdr{}
\author{Ye Liu}
\address{}
\email{liu@math.sci.hokudai.ac.jp}
\urladdr{}
\newtheorem{thm}{Theorem}[section]    
\newtheorem{cor}[thm]{Corollary}
\newtheorem{prop}[thm]{Proposition}
\newtheorem{conj}[thm]{Conjecture}
\newtheorem{lem}[thm]{Lemma}          
\theoremstyle{definition}
\newtheorem{defn}[thm]{Definition}    
\newtheorem{ex}[thm]{Example}
\newtheorem*{rem}{Remark}             
\newcommand{\G}{\Gamma}
\newcommand{\rank}{\mathrm{rank~}}
\begin{document}

\begin{abstract}    
In this paper, we compute the second mod $2$ homology of an arbitrary Artin group, without assuming the $K(\pi,1)$ conjecture. The key ingredients are (A) Hopf's formula for the second integral homology of a group and (B) Howlett's result on the second integral homology of Coxeter groups.
\end{abstract}

\maketitle


\section{Introduction}
An Artin group (or an Artin-Tits group) is a finitely presented group with at most one simple relation between a pair of generators. Examples includes finitely generated free abelian groups, free groups of finite rank, Artin's braid groups with finitely many strands and right-angled Artin groups, etc. Artin groups appear in diverse branches of mathematics such as singularity theory, low dimensional topology, geometric group theory and the theory of hyperplane arrangements, etc.

Artin groups are closely related to Coxeter groups. For a Coxeter graph $\G$ and the corresponding Coxeter system $(W(\G),S)$, we associate an Artin group $A(\G)$ obtained by, informally speaking, dropping the relations that each generator has order $2$ from the standard presentation of $W(\G)$. The symmetric group $\mathfrak{S}_n$ is the Coxeter group associated to the Coxeter graph of type $A_{n-1}$ and the braid group $Br(n)$ is the corresponding Artin group. The Coxeter group $W(\G)$ can be realized as a reflection group acting on a convex cone $U$ (called Tits cone) in $\mathbb{R}^n$ with $n=\#S$ the rank of $W$. Let $\mathcal{A}$ be the collection of reflection hyperplanes. The complement 
\[
M(\G)=(\mathrm{int}(U)+\sqrt{-1}\mathbb{R})\backslash\bigcup_{H\in\mathcal{A}}H\otimes\mathbb{C}
\]
admits the free $W(\G)$-action, and the resulting orbit space $N(\G)=M(\G)/W(\G)$ has the fundamental group isomorphic to $A(\G)$ (\cite{Lek1983}). The celebrated $K(\pi,1)$ conjecture states that $N(\G)$ is a $K(A(\G),1)$ space. See Subsection \ref{Kpi1} for a list of $\G$ for which the $K(\pi,1)$ conjecture is proved.

Existing results about (co)homology of Artin groups all focus on particular types Artin groups, for which the $K(\pi,1)$ conjecture has been proved. There are very few properties that can be said for (co)homology of {\it all} Artin groups (except for their first integral homology, which are simply abelianizations). In this paper, we compute the second mod $2$ homology of {\it all} Artin groups, without assuming an affirmative solution of the $K(\pi,1)$ conjecture. Our main tools are Hopf's formula on the second homology (or the Schur multiplier) of groups, together with Howlett's theorem (Theorem \ref{How}) on the second integral homology of Coxeter groups. We are inspired by \cite{Korkmaz2003}, where the authors computed the second integral homology of the mapping class groups of oriented surfaces using Hopf's formula.

Our main result is the following.
\begin{thm}
Let $A(\G)$ be the Artin group associated to a Coxeter graph $\G$. Then
\[
H_2(A(\G);\Z_2)\cong\Z_2^{p(\G)+q(\G)},
\]
where $p(\G)$ and $q(\G)$ are non-negative integers associated to $\G$; see Theorem \ref{CE} for definitions.
\end{thm}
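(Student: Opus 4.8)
The plan is to apply Hopf's formula to the standard Artin presentation and to use Howlett's computation to control the only genuinely hard piece. Write $F$ for the free group on the generating set $\{s_v\}$ indexed by the vertices $V$ of $\G$, and let $R\trianglelefteq F$ be the normal closure of the braid relators $r_e$, one for each edge $e=\{u,v\}$ carrying a finite label $m_{uv}$, so that $A(\G)=F/R$. Hopf's formula identifies $H_2(A(\G);\mathbb{Z})$ with $(R\cap[F,F])/[F,R]$, and I would record this inside the four-term exact sequence
\[
0\longrightarrow H_2(A(\G);\mathbb{Z})\longrightarrow R/[F,R]\xrightarrow{\ \theta\ }F^{\mathrm{ab}}\longrightarrow A(\G)^{\mathrm{ab}}\longrightarrow 0,
\]
where $\theta$ is induced by the inclusion $R\hookrightarrow F$ and $F^{\mathrm{ab}}=\mathbb{Z}^{V}$. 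Since the abelianized braid relations merely identify generators joined by an odd-labelled edge, $A(\G)^{\mathrm{ab}}=H_1(A(\G);\mathbb{Z})$ is free abelian; the universal coefficient theorem then kills the $\mathrm{Tor}$ term and gives $H_2(A(\G);\mathbb{Z}_2)\cong H_2(A(\G);\mathbb{Z})\otimes\mathbb{Z}_2$, so it suffices to understand $H_2(A(\G);\mathbb{Z})$ modulo $2$.

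Next I would make $\theta$ explicit. The relator $r_e$ abelianizes to $0$ when $m_{uv}$ is even and to $s_u-s_v$ when $m_{uv}$ is odd, so under the surjection $\rho\colon\mathbb{Z}^{E}\twoheadrightarrow R/[F,R]$, $e\mapsto\overline{r_e}$, the composite $\theta\rho=\partial$ is the map $\mathbb{Z}^E\to\mathbb{Z}^{V}$ that vanishes on even edges and is the simplicial boundary on the subgraph $\G_{\mathrm{odd}}$ carrying the odd edges. Hence $\ker\partial\cong\mathbb{Z}^{E_{\mathrm{even}}}\oplus H_1(\G_{\mathrm{odd}};\mathbb{Z})$ is free abelian and explicitly computable. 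Writing $K=\ker\rho$ for the module of identities among the braid relators, one gets $H_2(A(\G);\mathbb{Z})=\ker\partial/K$; tensoring the exact sequence $0\to K\to\ker\partial\to H_2(A(\G);\mathbb{Z})\to 0$ with $\mathbb{Z}_2$ presents $H_2(A(\G);\mathbb{Z}_2)$ as the cokernel of $K\otimes\mathbb{Z}_2\to\ker\partial\otimes\mathbb{Z}_2$, which is automatically a $\mathbb{Z}_2$-vector space of dimension at most $\#E_{\mathrm{even}}+\dim_{\mathbb{Z}_2}H_1(\G_{\mathrm{odd}};\mathbb{Z}_2)$. The entire problem is thereby reduced to computing the image of $K$ modulo $2$.

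Determining $K$, the relations among the braid relators, is the main obstacle: it is exactly the data the presentation $2$-complex fails to see (the spherical classes in its $\pi_2$), and it is what one would ordinarily read off from a $K(\pi,1)$ model. To reach it without that hypothesis I would compare with the Coxeter presentation $W(\G)=F/R'$, where $R'$ is the normal closure of the same $r_e$ together with the squares $s_v^2$. Running the identical Hopf analysis for $W(\G)$ produces an identity module $K'$, and the inclusion $R\hookrightarrow R'$ yields a compatible map $K\to K'$; meanwhile Howlett's theorem (Theorem~\ref{How}) describes $H_2(W(\G);\mathbb{Z})$ completely as an explicit elementary abelian $2$-group. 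The crux is to separate, inside the Coxeter identities, those involving only the braid relators from those involving the squares $s_v^2$, so that Howlett's answer pins down the image of $K$ mod $2$ once the $s_v^2$-contributions are accounted for separately. I expect this separation — controlling $K$ mod $2$ through Howlett rather than through $K(\pi,1)$ — to be the delicate heart of the argument.

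Finally I would assemble the rank count from the cokernel computation, which splits $\dim_{\mathbb{Z}_2}H_2(A(\G);\mathbb{Z}_2)$ into two combinatorially distinct contributions: one, $q(\G)$, arising from the even-labelled edges (already visible as the free summand $\mathbb{Z}$ surviving in each even-labelled dihedral Artin group), and another, $p(\G)$, arising from the cycles of $\G_{\mathrm{odd}}$ together with the residual $2$-torsion detected by Howlett's theorem. This yields $H_2(A(\G);\mathbb{Z}_2)\cong\mathbb{Z}_2^{p(\G)+q(\G)}$, with $p(\G)$ and $q(\G)$ as defined in Theorem~\ref{CE}.
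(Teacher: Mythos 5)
You have assembled the right scaffolding --- Hopf's formula, the reduction to $H_2(A(\G);\Z)\otimes\Z_2$ via torsion-freeness of $H_1(A(\G);\Z)$, and a comparison with the Coxeter quotient controlled by Howlett's theorem --- and this is indeed the paper's strategy in outline. But the step you defer as ``the delicate heart'' is the actual content of the proof, and as planned it would not go through. Two concrete problems. First, your upper bound $\dim_{\Z_2}H_2(A(\G);\Z_2)\leq \#E_{\mathrm{even}}+\dim_{\Z_2}H_1(\G_{odd};\Z_2)$ counts every pair with $m(s,t)$ even and finite, whereas the true answer $p(\G)+q(\G)$ counts the $m(s,t)=2$ pairs only up to the equivalence $\sim$ of Theorem \ref{CE}: already for $\G$ with $m(s,t)=m(s,t')=2$ and $m(t,t')=3$ your bound gives $2$ while $H_2(A(\G);\Z_2)\cong\Z_2$. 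Closing this gap requires exhibiting explicit elements of your identity module $K$, which the paper does via Lemmas \ref{conj} and \ref{comm}: $\langle a_s,a_t\rangle_A-\langle a_s,a_{t'}\rangle_A=\langle a_s,R(a_t,a_{t'})\rangle_A=0$ whenever $\{s,t\}\equiv\{s,t'\}$, an identity among the braid relators alone. Second, Howlett's theorem cannot by itself ``pin down the image of $K$ mod $2$'': it gives only the isomorphism type of $H_2(W(\G);\Z)$, and passing from the Coxeter identity module $K'$ back to $K$ is exactly where the squares intervene non-formally. For odd $m=m(s,t)$ one has $\left((st)_m(ts)_m^{-1}\right)^2(s^2)^{-1}t^2\in[F_W,N_W]$, i.e.\ twice an odd braid relator is a combination of squares modulo $[F_W,N_W]$ --- a consequence of $H_2(D_{2m};\Z)=0$ (Example \ref{di}) that has no analogue in $A(\G)$; this is why in the paper the odd-edge contribution is parametrized by $Z_1(\G_{odd};\Z_2)$ on the Coxeter side but by $Z_1(\G_{odd};\Z)$ on the Artin side.

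What Howlett's theorem actually supplies is the \emph{lower} bound, and for that you need surjectivity of $p_*\colon H_2(A(\G);\Z)\to H_2(W(\G);\Z)$, which your proposal never establishes. The paper proves it (Theorem \ref{epi}) by constructing generating sets $\Omega(A)$ and $\Omega(W)$ of the two Hopf quotients, each of cardinality at most $p(\G)+q(\G)$, and checking $p_*(\Omega(A))=\Omega(W)$; the crucial point, flagged in the remark after Theorem \ref{OmegaW}, is that every class in $\Omega(W)$ is represented by a word in the braid relators with the relators $Q(s)=s^2$ eliminated, so that it is visibly hit from the Artin side. With the sharp upper bound $\#\Omega(A)\leq p(\G)+q(\G)$ and the surjection onto $H_2(W(\G);\Z)\cong\Z_2^{p(\G)+q(\G)}$, a squeeze after tensoring with $\Z_2$ finishes the proof --- no computation of $K$ itself is ever needed. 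A small symptom of the unexecuted endgame: your final accounting assigns the cycles of $\G_{odd}$ to $p(\G)$, but they contribute $q_3(\G)$ to $q(\G)$, while $p(\G)$ and $q_1(\G)$ both arise from the $m=2$ pairs.
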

As a corollary, we obtain a sufficient condition that the classifying map $c:N(\G)\to K(A(\G),1)$ induces an isomorphism
\[
c_*:H_2(N(\G);\Z)\to H_2(A(\G);\Z).
\]
Furthermore, we conclude that the induced homomorphism
\[
c_*\otimes\mathrm{id}_{\Z_2}:H_2(N(\G);\Z)\otimes\Z_2\to H_2(A(\G);\Z)\otimes\Z_2
\]
is always an isomorphism. This provides affirmative evidence for the $K(\pi,1)$ conjecture.

A part of contents of this paper is based on the second author's Ph.D. thesis.

\section{Preliminaries}

We collect relevant definitions and properties of Coxeter groups and Artin groups. We refer to \cite{Bourbaki1968, Humphreys1990} for Coxeter groups and \cite{Paris2009,Paris2012a,Paris2014} for Artin groups.

\subsection{Coxeter groups}\label{Coxeter}
Let $S$ be a finite set. A {\it Coxeter matrix} over $S$ is a symmetric matrix $M=(m(s,t))_{s,t\in S}$ such that $m(s,s)=1$ for all $s\in S$ and $m(s,t)=m(t,s)\in\{2,3,\cdots\}\cup\{\infty\}$ for distinct $s,t\in S$. It is convenient to represent $M$ by a labeled graph $\G$, called the {\it Coxeter graph} of $M$ defined as follows:
\begin{itemize}
\item The vertex set $V(\G)=S$;
\item The edge set $E(\G)=\left\{\{s,t\}\subset S\mid m(s,t)\geq 3\right\}$;
\item The edge $\{s,t\}$ is labeled by $m(s,t)$ if $m(s,t)\geq 4$.
\end{itemize}
Let $\G_{odd}$ be the subgraph of $\G$ with $V(\G_{odd})=V(\G)$ and $E(\G_{odd})=\{\{s,t\}\in E(\G)\mid m(s,t) \text{ odd}\}$ inheriting labels from $\G$. By abuse of notations, we frequently regard $\G$  (hence also $\G_{odd}$) as its underlying $1$-dimensional CW-complex.

\begin{defn}
Let $\G$ be a Coxeter graph and $S$ its vertex set. The {\it Coxeter system} associated to $\G$ is the pair $(W(\G),S)$, where the {\it Coxeter group} $W(\G)$ is defined by the following standard presentation
\[ 
W(\G)=\langle S\mid (st)^{m(s,t)}=1, \forall s,t\in S \text{ with } m(s,t)\neq\infty \rangle.
\]
Each generator $s\in S$ of $W$ has order $2$. For distinct $s,t\in S$, the order of $st$ is precisely $m(s,t)$ if $m(s,t)\neq\infty$. In case $m(s,t)=\infty$, the element $st$ has infinite order. 
\end{defn}

However, in this paper, we adopt an equivalent definition. To do so, we first introduce a notation. For two letters $s,t$ and an integer $m\geq 2$, we shall use the following notation of the word of length $m$ consisting of $s$ and $t$ in an alternating order.
\[
(st)_m:=\overbrace{sts\cdots}^m.
\]
For example, $(st)_2=st,(st)_3=sts,(st)_4=stst$.
\begin{defn}\label{cox}
Let $\G$ be a Coxeter graph and $S$ its vertex set. The {\it Coxeter group} associated to $\G$ is the group defined by the following presentation
\[
W(\G)=\langle S\mid \overline{R_W}\cup Q_W\rangle.
\]
The sets of relations are $\overline{R_W}=\{R(s,t)\mid m(s,t)<\infty\}$ and $Q_W=\{Q(s)\mid s\in S\}$, where $R(s,t):=(st)_{m(s,t)}(ts)_{m(s,t)}^{-1}$ and $Q(s):=s^2$.

Note that since $R(s,t)=R(t,s)^{-1}$, we may reduce the relation set $\overline{R_W}$ by introducing a total order on $S$ and put $R_W:=\{R(s,t)\mid m(s,t)<\infty, s<t\}$. We have the following presentation with fewer relations
\[
W(\G)=\langle S\mid R_W\cup Q_W\rangle.
\]
We shall omit the reference to $\G$ if there is no ambiguity. The rank of $W$ is defined to be $\#S$.
\end{defn}

Let $(W,S)$ be a Coxeter system. For a subset $T\subset S$, let $W_T$ denote the subgroup of $W$ generated by $T$, called a {\it parabolic subgroup} of $W$. In particular, $W_S=W$ and $W_{\varnothing}=\{1\}$. It is known that $(W_T,T)$ is the Coxeter system associated to the Coxeter graph $\G_T$ (the full subgraph of $\G$ spanned by $T$ inheriting labels)(cf. Th\'{e}or\`eme 2 in Chapter IV of \cite{Bourbaki1968}).

\subsection{Artin groups}\label{Artin}

The Artin group $A(\G)$ associated to a Coxeter graph $\G$ is obtained from the presentation of $W(\G)$ by dropping the relation set $Q_W$.

\begin{defn}\label{art}
Given a Coxeter graph $\G$ (hence a Coxeter system $(W,S)$), we introduce a set $\Sigma=\{a_s\mid s\in S\}$ in one-to-one correspondence with $S$. Then the Artin system associated to $\G$ is the pair $(A(\G),\Sigma)$, where $A(\G)$ is the {\it Artin group} of type $\G$ defined by the following presentation:
\[
A(\G)=\langle \Sigma\mid \overline{R_A}\rangle,
\]
where $\overline{R_A}=\{R(a_s,a_t)\mid m(s,t)<\infty\}$ and $R(a_s,a_t)=(a_sa_t)_{m(s,t)}(a_ta_s)_{m(s,t)}^{-1}$.

As in the Coxeter group case, we introduce a total order on $S$ and put $R_A:=\{R(a_s,a_t)\mid m(s,t)<\infty, s<t\}$. We have the following presentation with fewer relations
\[
A(\G)=\langle \Sigma\mid R_A\rangle.
\]
\end{defn}

There is a canonical projection $p: A(\G)\to W(\G)$, $a_s\mapsto s~ (s\in S)$, whose kernel is called the {\it pure Artin group} of type $\G$.

We say that an Artin group $A(\G)$ is of finite type (or spherical type) if the associated Coxeter group $W(\G)$ is finite, otherwise $A(\G)$ is of infinite type (or non-spherical type).

\subsection{$K(\pi,1)$ conjecture}\label{Kpi1}
Consider a Coxeter graph $\G$ and the associated Coxeter system $(W,S)$ with rank $\#S=n$. Recall that $W$ can be realized as a reflection group acting on a Tits cone $U\subset\mathbb{R}^n$ (see \cite{Paris2012a}). Let $\mathcal{A}$ be the collection of the reflection hyperplanes. Put
\[
M(\G):=\left(\mathrm{int}(U)+\sqrt{-1}\mathbb{R}^n\right)\backslash\bigcup_{H\in\mathcal{A}}H\otimes\mathbb{C}.
\]
Then $W$ acts on $M(\G)$ freely and properly discontinuously. Denote the orbit space by
\begin{equation}\label{N}
N(\G):=M(\G)/W.
\end{equation}
It is known that 
\begin{thm}[\cite{Lek1983}]
The fundamental group of $N(\G)$ is isomorphic to the Artin group $A(\G)$.
\end{thm}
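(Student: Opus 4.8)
The plan is to realize $\pi_1(N(\G))$ through the regular covering $p\colon M(\G)\to N(\G)$ and to match its generators and relations with those of $A(\G)$, exploiting the combinatorics of the Tits cone. First I would record the covering-space exact sequence. Since $W$ acts freely and properly discontinuously on the connected space $M(\G)$, the map $p$ is a regular covering with deck group $W$; after fixing a basepoint $x_0$ in the open fundamental chamber $C_0\subset\mathrm{int}(U)$ (a real point avoiding every hyperplane) I obtain
\[
1\longrightarrow\pi_1(M(\G),x_0)\longrightarrow\pi_1(N(\G),p(x_0))\xrightarrow{\ \rho\ }W\longrightarrow1,
\]
where $\rho$ sends a loop to the deck transformation carrying $x_0$ to the endpoint of its lift. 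The ambient tube $\mathrm{int}(U)+\sqrt{-1}\mathbb{R}^n$ is convex, hence contractible, so $M(\G)$ is the complement of the complexified arrangement in a simply connected space: an arrangement-complement group whose only homotopy comes from the removed hyperplanes.

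Next I would construct the candidate generators. For each $s\in S$ let $H_s$ be the wall of $C_0$ fixed by $s$, and let $\gamma_s$ be a path in $M(\G)$ from $x_0$ to $s\cdot x_0$ that stays in the real chamber except for a short half-loop into the imaginary direction to slip around $H_s\otimes\mathbb{C}$. Setting $\sigma_s:=[p\circ\gamma_s]$ gives $\rho(\sigma_s)=s$, and I would attempt to define $\phi\colon A(\G)\to\pi_1(N(\G))$ by $a_s\mapsto\sigma_s$. To see that $\phi$ is well defined I must verify the braid relation $(\sigma_s\sigma_t)_{m(s,t)}=(\sigma_t\sigma_s)_{m(s,t)}$ for each edge $\{s,t\}$ with $m(s,t)<\infty$. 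This is a purely \emph{local} statement at the codimension-two stratum $H_s\cap H_t$: slicing by a generic transverse $2$-plane turns the local picture into the central arrangement of $m(s,t)$ lines in $\mathbb{C}^2$ modulo the dihedral group $W_{\{s,t\}}$, and the classical Brieskorn--Deligne rank-two computation yields exactly the dihedral Artin relation. When $m(s,t)=\infty$ the walls $H_s$ and $H_t$ do not meet inside $\mathrm{int}(U)$, so no such stratum exists and no relation is imposed, matching Definition \ref{art}.

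I would then assemble the isomorphism from the commuting ladder of short exact sequences
\[
\begin{CD}
1 @>>> P(\G) @>>> A(\G) @>>> W @>>> 1\\
@. @VVV @VV{\phi}V @| @.\\
1 @>>> \pi_1(M(\G)) @>>> \pi_1(N(\G)) @>>{\rho}> W @>>> 1,
\end{CD}
\]
where $P(\G)=\ker(A(\G)\to W)$ is the pure Artin group and the left vertical map is induced by $\phi$. By the five lemma it suffices to prove that $\phi$ restricts to an isomorphism $P(\G)\xrightarrow{\ \sim\ }\pi_1(M(\G))$; concretely, that the meridians $\sigma_s$ generate $\pi_1(N(\G))$ and that the braid relations already present it. Generation follows by lifting an arbitrary loop to a path from $x_0$ to $wx_0$ and resolving it, via a gallery joining $C_0$ to $wC_0$ together with the meridians around the crossed walls, into a product of the $\gamma_s$.

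The hard part will be this last completeness claim: showing that the rank-two (braid) relations are the \emph{only} relations among the $\sigma_s$. For a single hyperplane arrangement this is a Zariski--van Kampen computation, but here $\mathcal{A}$ is infinite whenever $W$ is infinite, so one cannot simply intersect with one generic pencil and invoke the van Kampen theorem. The genuine work -- and the step I expect to be the main obstacle -- is to use the convexity of $\mathrm{int}(U)$ together with the fact that every codimension-two stratum is governed by a rank-two parabolic subgroup, in order to reduce all relations to the local ones and to patch the local van Kampen presentations across the (locally finite, properly discontinuous) arrangement into a single global presentation of $\pi_1(M(\G))$, equivalently of $\pi_1(N(\G))$.
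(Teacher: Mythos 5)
You have correctly identified the standard skeleton (covering sequence for the free, properly discontinuous $W$-action, meridian generators $\sigma_s$ with $\rho(\sigma_s)=s$, local verification of the braid relations at the codimension-two strata $H_s\cap H_t$ via the rank-two dihedral picture, and the correct observation that $m(s,t)=\infty$ produces no stratum in $\mathrm{int}(U)$ and hence no relation). But your proposal is not a proof: the completeness claim --- that the rank-two relations \emph{present} $\pi_1(N(\G))$ --- is exactly the content of van der Lek's theorem, and you explicitly defer it (``the genuine work \dots the step I expect to be the main obstacle''). Note that the paper itself states this result as a citation to \cite{Lek1983} and offers no proof, so your attempt must stand on its own; as written, it establishes only that $\phi\colon A(\G)\to\pi_1(N(\G))$ is a well-defined homomorphism hitting generators (surjectivity, via the gallery-lifting argument, is fine), i.e.\ an epimorphism, not an isomorphism. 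Injectivity cannot be obtained by ``patching local van Kampen presentations'': the arrangement is infinite and only locally finite when $W$ is infinite, there is no generic pencil, and an open cover by local models has pattern of intersections that does not obviously yield a presentation with only rank-two relations. What is actually needed is a global combinatorial model --- van der Lek's theorem on ``simple arrangements'' in a convex tube $\mathrm{int}(U)+\sqrt{-1}\,\mathbb{R}^n$, proved by a transversality/general-position argument pushing maps of disks into the two-skeleton of the stratification (equivalently, in modern language, the Salvetti complex of the quotient, whose $2$-cells realize precisely the relations $R(a_s,a_t)$). Without constructing such a model and proving it carries $\pi_1$, the key step is missing rather than merely hard.

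A secondary, smaller point: the five-lemma ladder buys you nothing, since the left-hand vertical map $P(\G)\to\pi_1(M(\G))$ is defined as the restriction of $\phi$ and proving it is an isomorphism is equivalent in difficulty to the theorem itself ($P(\G)$ has no independently known presentation a priori). The standard route is to prove the quotient statement $\pi_1(N(\G))\cong A(\G)$ directly from the global model and then \emph{deduce} $\pi_1(M(\G))\cong P(\G)$ from the covering sequence, i.e.\ the logical order is the reverse of the one you propose.
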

In general, $N(\G)$ is only conjectured to be a classifying space of $A(\G)$.
\begin{conj}
Let $\G$ be an arbitrary Coxeter graph, then the orbit space $N(\G)$ is a $K(\pi,1)$ space, hence is a classifying space of the Artin group $A(\G)$.
\end{conj}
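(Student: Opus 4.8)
The statement is the $K(\pi,1)$ conjecture, which is open for a general Coxeter graph $\G$; what I can offer is the strategy by which it has been verified in every known case, together with an indication of exactly where a uniform argument breaks down. By van der Lek's theorem $\pi_1(N(\G))\cong A(\G)$, so proving the conjecture is equivalent to showing that all higher homotopy groups of $N(\G)$ vanish, i.e.\ that the universal cover of $N(\G)$ is contractible. The plan is to reduce this to a combinatorial contractibility problem and then to attack it metrically. First I would replace $M(\G)$, hence $N(\G)$, by a homotopy equivalent finite-dimensional regular CW complex---the Salvetti complex $\mathrm{Sal}(\G)$, assembled from the poset of spherical subsets $T\subseteq S$ (those with $W_T$ finite). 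This produces a compact model with fundamental group $A(\G)$ and reduces the conjecture to the contractibility of its universal cover $\widetilde{\mathrm{Sal}}(\G)$, equivalently of the Deligne complex of $A(\G)$.

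Next I would equip $\widetilde{\mathrm{Sal}}(\G)$ with a piecewise-Euclidean metric modeled on the Davis--Moussong metric of the associated Coxeter group and attempt to verify that it is CAT(0), so that contractibility follows from the Cartan--Hadamard theorem. This is precisely the route that succeeds in the known cases: Deligne's convexity argument for finite (spherical) type, the Charney--Davis proof that the Deligne complex is CAT(0) for two-dimensional and for FC-type Artin groups, and the subsequent treatment of the affine case. As a coarse consistency check, the computation carried out in this paper---that $c_*\otimes\mathrm{id}$ is an isomorphism on $H_2$ with $\mathbb{Z}_2$ coefficients---is exactly the sort of low-degree evidence one would expect if $c$ were a homotopy equivalence.

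The hard part, and the reason the conjecture is still open, is the curvature verification in full generality. Although Moussong proved that the Davis complex of \emph{every} Coxeter group is CAT(0), the analogous statement for the Deligne complex of $A(\G)$ is itself only known in the two-dimensional and FC ranges and resists a uniform proof, so one cannot simply invoke Cartan--Hadamard across the board. A complete proof would therefore need either a new metric or combinatorial model whose universal cover is visibly contractible for arbitrary $\G$, or a fundamentally different attack---for example a discrete Morse-theoretic collapse of $\widetilde{\mathrm{Sal}}(\G)$, or a direct simple-connectivity-plus-acyclicity argument---that bypasses nonpositive curvature altogether.
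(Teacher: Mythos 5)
You have not proved the statement, and neither does the paper: this is the $K(\pi,1)$ conjecture itself, which the paper records precisely as a \emph{conjecture}, supplying only the list of classes for which it is known (Deligne for finite type, Hendriks for large type, Charney--Davis for $2$-dimensional and FC type, Okonek and Callegaro--Moroni--Salvetti for the affine types $\widetilde{A_n},\widetilde{C_n},\widetilde{B_n}$, and the Ellis--Sk\"oldberg reduction). So judged strictly as a proof attempt, the gap is total and you have named it yourself: the entire argument funnels through verifying that the universal cover of the Salvetti complex (equivalently, up to a nontrivial reduction due to Charney--Davis and Godelle--Paris, the Deligne complex) is contractible, and the CAT(0) verification that would deliver this via Cartan--Hadamard is exactly the step that is open in general. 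Your diagnosis of why Moussong's theorem does not transfer --- the Davis complex of any Coxeter group is CAT(0), but the Moussong-type metric on the Deligne complex is only known to be CAT(0) in the $2$-dimensional and FC ranges --- is accurate and is the honest state of the art. Two small points of precision: the passage from contractibility of $\widetilde{\mathrm{Sal}}(\G)$ to the Deligne complex is an equivalence that itself requires proof, not a definition; and the $H_2$ computation of this paper is not a consequence one extracts from the conjectural picture but an \emph{unconditional} result (Theorem \ref{H2Z2}), whose relation to the conjecture runs the other way --- the paper shows $c_*\otimes\mathrm{id}_{\mathbb{Z}_2}$ is an isomorphism without assuming $N(\G)$ aspherical, which is precisely why it counts as evidence.

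It is worth emphasizing how the paper positions itself relative to the statement you were asked about: rather than attempting the homotopy-theoretic program you sketch, it sidesteps asphericity entirely, using Hopf's formula and Howlett's computation of $H_2(W(\G);\mathbb{Z})$ to get at $H_2(A(\G);\mathbb{Z}_2)$ directly through the surjection $p_*:H_2(A(\G);\mathbb{Z})\twoheadrightarrow H_2(W(\G);\mathbb{Z})$, together with the exact sequence $\pi_2(N(\G))\to H_2(N(\G);\mathbb{Z})\to H_2(A(\G);\mathbb{Z})\to 0$. If you wanted to extend the paper's style of low-degree evidence rather than attack the conjecture head-on, the natural continuation would be higher-degree comparisons of $H_*(N(\G))$ with $H_*(A(\G))$, not a new curvature argument; your proposal is a correct research program for the conjecture, but it neither proves the statement nor reflects anything the paper actually does.
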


This conjecture is proved to hold for a few classes of Artin groups. Here is a list of such classes known so far.
\begin{itemize}
\item Artin groups of finite type (\cite{Deligne1972}).
\item Artin groups of large type (\cite{Hendriks1985}).
\item $2$-dimensional Artin groups (\cite{Charney1995}).
\item Artin groups of FC type (\cite{Charney1995}).
\item Artin groups of affine types $\widetilde{A_n},\widetilde{C_n}$ (\cite{Okonek1979}).
\item Artin groups of affine type $\widetilde{B_n}$ (\cite{Callegaro2010}).
\item Artin group $A(\G)$ such that the $K(\pi,1)$ conjecture holds for all $A(\G_T)$ where $T\subset S$ and $\G_T$ does not contain $\infty$-labeled edges (\cite{Ellis2010}).
\end{itemize}

\subsection{First and second homology of $N(\G)$}\label{1&2}
Clancy and Ellis \cite{Clancy2010} computed the second integral homology of $N(\G)$ using the Salvetti complex for an Artin group. We recall their result and follow their notations.

Let us first fix some notations. Let $\G$ be a Coxeter graph with vertex set $S$. Define $Q(\G)=\{\{s,t\}\subset S\mid m(s,t) \text{ is even}\}$ and $P(\G)=\{\{s,t\}\subset S\mid m(s,t)=2\}$. Write $\{s,t\}\equiv\{s^{\prime},t^{\prime}\}$ if two such pairs in $P(\G)$ satisfy $s=s^{\prime}$ and $m(t,t^{\prime})$ is odd. This generates an equivalence relation on $P(\G)$, denoted by $\sim$. Let $P(\G)/\sim$ be the set of equivalence classes. An equivalence class is called a {\it torsion} class if it is represented by a pair $\{s,t\}\in P(\G)$ such that there exists a vertex $v\in S$ with $m(s,v)=m(t,v)=3$. In the above situation, Clancy and Ellis proved the following theorem.

\begin{thm}[\cite{Clancy2010}]\label{CE}
Let $\G$ be a Coxeter graph and $N(\G)$ as in (\ref{N}), then
\[
H_2(N(\G);\Z)\cong\Z_2^{p(\G)}\oplus\Z^{q(\G)},
\]
where
\begin{align*}
p(\G)&:=\text{ number of torsion classes in }P(\G)/\sim,\\
q_1(\G)&:=\text{ number of non-torsion classes in }P(\G)/\sim,\\
q_2(\G)&:=\#(Q(\G)-P(\G))=\#\{\{s,t\}\subset S\mid m(s,t)\geq 4 \text{ is even}\},\\
q_3(\G)&:=\rank H_1(\G_{odd};\Z),\\
q(\G)&:=q_1(\G)+q_2(\G)+q_3(\G).
\end{align*}
\end{thm}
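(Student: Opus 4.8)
The plan is to replace $N(\G)$ by a finite CW-model with an explicit cellular chain complex in low degrees and to compute $H_2(N(\G);\Z)=\ker\partial_2/\operatorname{im}\partial_3$. I would use the Salvetti complex $\mathrm{Sal}(\G)$, which is homotopy equivalent to $N(\G)$ and whose cells are indexed by the \emph{spherical} subsets $T\subseteq S$ (those with $W_T$ finite), a cell of dimension $\#T$. In degrees $\le 3$ this yields one $0$-cell, a $1$-cell for each $s\in S$, a $2$-cell $[\{s,t\}]$ for each pair with $m(s,t)<\infty$, and a $3$-cell for each spherical triple. The $2$-skeleton is precisely the presentation complex of $A(\G)$ from Definition \ref{art}, so $\partial_2$ is obtained by abelianising the relators $R(a_s,a_t)=(a_sa_t)_{m(s,t)}(a_ta_s)_{m(s,t)}^{-1}$ (Fox calculus).

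Comparing exponent sums shows that $[\{s,t\}]$ maps to $0$ when $m(s,t)$ is even and to $a_s-a_t\in C_1\cong\Z^{S}$ when $m(s,t)$ is odd. Hence
\[
\ker\partial_2=\big\langle\,[\{s,t\}]\mid m(s,t)\text{ even}\,\big\rangle\ \oplus\ Z_1(\G_{odd}),
\]
where $Z_1(\G_{odd})$ is the cycle group of the odd graph $\G_{odd}$, free of rank $q_3(\G)=\rank H_1(\G_{odd};\Z)$. Moreover $\partial_2\partial_3=0$ forces the coefficient of every odd $2$-cell in each $\partial_3$ to vanish: within a single triple at most two edges are odd, and they then span a forest, so their images $a_s-a_t$ are linearly independent in $\Z^{S}$. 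Therefore $\operatorname{im}\partial_3$ lies in the span of the even $2$-cells, is disjoint from $Z_1(\G_{odd})$, and the summand $\Z^{q_3}$ of the answer survives untouched.

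It remains to compute the even-cell relations coming from $\partial_3$. For a reducible triple of type $A_1\times I_2(m)$ the Salvetti complex is the product $S^1\times\mathrm{Sal}(I_2(m))$, so the $3$-cell $e^1\times e^2$ has boundary $-\,e^1\times\partial_2 e^2$; this equals $\pm\big([\{s,t'\}]-[\{s,t\}]\big)$ when $m(t,t')$ is odd and $0$ when it is even. Thus the odd $A_1\times I_2$ triples impose exactly the identifications that generate the relation $\sim$ on $P(\G)$, while the even ones impose nothing, leaving the even $m\ge 4$ cells free. For the irreducible triples one checks directly that $\partial_3$ vanishes for $B_3$ and $H_3$, whereas for $A_3$---the configuration $m(s,v)=m(v,t)=3$, $m(s,t)=2$, which is precisely the torsion-class configuration---the vanishing of the two odd coefficients leaves $\partial_3=\gamma\,[\{s,t\}]$, and the attaching map gives $\gamma=\pm2$, so $[\{s,t\}]$ acquires order $2$.

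Assembling $\ker\partial_2/\operatorname{im}\partial_3$ now gives one cyclic summand for each class of $P(\G)/\sim$---a $\Z_2$ for each torsion class (there are $p(\G)$ of them, produced by the $A_3$ relation) and a $\Z$ for each non-torsion class ($q_1(\G)$ of them)---together with $\Z^{q_2(\G)}$ from the even $m\ge 4$ cells and $\Z^{q_3(\G)}$ from the odd cycles. This yields $H_2(N(\G);\Z)\cong\Z_2^{p(\G)}\oplus\Z^{q(\G)}$. The main obstacle is the explicit determination of $\partial_3$ on the irreducible triples: establishing the coefficient $\pm2$ for $A_3$, the sole source of $2$-torsion, and the vanishing for $B_3$ and $H_3$, requires an honest analysis of the rank-$3$ Salvetti attaching maps (equivalently, a careful bookkeeping of the finite Coxeter groups $A_3,B_3,H_3$); these values can be cross-checked against the second homology of the corresponding finite-type Artin groups, for which $N(\G)$ is genuinely a $K(\pi,1)$.
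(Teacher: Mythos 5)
Your proposal is correct in outline and is essentially the proof of the paper's source: the paper itself gives no proof of Theorem \ref{CE} but quotes it from Clancy--Ellis \cite{Clancy2010}, who compute $H_2(N(\G);\Z)$ exactly this way, via the low-dimensional cells of the Salvetti complex. All the mechanisms you identify are the right ones --- the coefficients of the odd $2$-cells in every $\partial_3$ vanish by $\partial_2\partial_3=0$ (your forest argument works since no spherical triple has three odd edges), the $A_1\times I_2(m)$ triples with $m$ odd generate precisely the relation $\sim$ on $P(\G)$, and the $A_3$ triples contribute the coefficient $\pm 2$ that produces the $\Z_2^{p(\G)}$ --- and the one step you leave open (the explicit $\partial_3$ on the irreducible types $A_3$, $B_3$, $H_3$) has the values you state, which can be closed either by the De Concini--Salvetti boundary formula for the Salvetti complex or, as you observe, by cross-checking against the known $H_2$ of the corresponding finite-type Artin groups, a legitimate check since Deligne's theorem makes $N(\G)$ aspherical in those cases.
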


\begin{rem}\label{depend}
Note that $H_1(N(\G);\Z)\cong H_1(A(\G);\Z)$ is isomorphic to the abelianization of $A(\G)$, which is a free abelian group with rank equals to $\rank H_0(\G_{odd};\Z)$, the number of connected components of $\G_{odd}$.
\end{rem}

\section{Second mod 2 homology of Artin groups}\label{mainresult}
The (co)homology of the orbit space $N(\G)$ coincides with that of the Artin group $A(\G)$, provided the $K(\pi,1)$ conjecture for $A(\G)$ holds. There are many results about (co)homology of $N(\G)$ in the literature, for example \cite{DeConcini1999a,DeConcini2001,Callegaro2008a,Callegaro2010}. The $K(\pi,1)$ conjecture is known to hold in these cases.

In this section, nevertheless, we shall work on the second homology of {\it arbitrary} Artin groups, without assuming that the $K(\pi,1)$ conjecture holds. Our main result is the following theorem.

\begin{thm}\label{H2Z2}
Let $\G$ be an arbitrary Coxeter graph and $A(\G)$ the associated Artin group. Then the second mod $2$ homology of $A(\G)$ is
\[
H_2(A(\G);\Z_2)\cong\Z_2^{p(\G)+q(\G)},
\]
where $p(\G)$ and $q(\G)$ are as in Theorem \ref{CE}.
\end{thm}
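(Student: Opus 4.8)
The plan is to compute $H_2(A(\G);\Z)$ directly from the Artin presentation by means of Hopf's formula, and then to pass to $\Z_2$-coefficients. Write $F$ for the free group on $\Sigma$ and $N_A\trianglelefteq F$ for the normal closure of the braid relators $R_A$, so that $A(\G)=F/N_A$ and Hopf's formula gives $H_2(A(\G);\Z)\cong (N_A\cap[F,F])/[F,N_A]$. Because the abelianization $H_1(A(\G);\Z)$ is free abelian (Remark \ref{depend}), the universal coefficient theorem carries no $\mathrm{Tor}$ term and yields $H_2(A(\G);\Z_2)\cong H_2(A(\G);\Z)\otimes\Z_2$; thus it suffices to determine the integral group together with the effect of mod $2$ reduction.

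First I would identify $F=F(\Sigma)$ with the free group $F(S)$ on the Coxeter generators via $a_s\leftrightarrow s$. Under this identification each Artin relator $R(a_s,a_t)$ becomes the corresponding braid relator of the standard Coxeter presentation $W(\G)=\langle S\mid R_W\cup Q_W\rangle$, so $N_A=\langle\langle R_W\rangle\rangle$ while the Coxeter relation group $N_W=\langle\langle R_W\cup Q_W\rangle\rangle$ satisfies $N_A\subseteq N_W=N_A\cdot\langle\langle s^2:s\in S\rangle\rangle$. This places the Artin and the Coxeter computations over a common free group and a shared set of braid relators: both $H_2(A(\G);\Z)$ and $H_2(W(\G);\Z)$ are subquotients of $[F,F]$ assembled from the same braid words, the sole difference being the square relators $Q_W$ present only on the Coxeter side.

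The heart of the matter is the relation module $N_A/[F,N_A]$, which is the abelian group generated by the braid-relator classes $[R(a_s,a_t)]$, together with the subgroup $N_A\cap[F,F]$. The map $\partial\colon N_A/[F,N_A]\to F^{\mathrm{ab}}=\Z^S$ sends $[R(a_s,a_t)]$ to $s-t$ when $m(s,t)$ is odd and to $0$ when $m(s,t)$ is even, and $H_2(A(\G);\Z)=\ker\partial$. Were the relator classes freely independent the kernel would be free abelian, so the $2$-torsion detected by Clancy and Ellis must arise from nontrivial identities among the braid relators; these are exactly the identities that Howlett analyzes for Coxeter groups. Rather than rederiving them, I would import Theorem \ref{How}: since the braid relators are shared, each identity among them on the Coxeter side transports to $N_A/[F,N_A]$, and what remains is to isolate the contribution of the removed square relators $Q_W$ and to verify that no further collapse occurs on the Artin side. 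Organizing the edges of $\G$ by the three regimes $m(s,t)$ odd, $m(s,t)\ge 4$ even, and $m(s,t)=2$, I expect the kernel to split into the four blocks of Theorem \ref{CE}: the odd edges contribute through $\rank H_1(\G_{odd};\Z)=q_3$ (the cycle space of $\G_{odd}$ inside $\ker\partial$), the even edges with $m\ge 4$ and the non-torsion classes contribute the free summands $q_2$ and $q_1$, and the torsion $\sim$-classes—those carrying a common neighbour $v$ with $m(s,v)=m(t,v)=3$—contribute the $2$-torsion $\Z_2^{p}$ that Howlett's identities produce.

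The main obstacle is precisely this transport step: matching Howlett's identities among the shared braid relators with the relations in $N_A/[F,N_A]$ while accounting for the absent square relators, and confirming that the torsion classes survive with the correct order. A convenient a priori bound keeps the bookkeeping honest: the Hopf exact sequence $\pi_2(N(\G))\to H_2(N(\G);\Z)\to H_2(A(\G);\Z)\to 0$ for the space $N(\G)$ shows that $c_*$ is surjective, so by Theorem \ref{CE} we obtain $\dim_{\Z_2}H_2(A(\G);\Z_2)\le p(\G)+q(\G)$ after tensoring with $\Z_2$; the Hopf--Howlett computation then supplies the matching lower bound by exhibiting $p(\G)+q(\G)$ independent classes. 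Once mod $2$ reduction is applied—where the integral torsion $\Z_2^{p}$ persists and the free part $\Z^{q}$ reduces to $\Z_2^{q}$ with no cancellation—the two bounds coincide and give $H_2(A(\G);\Z_2)\cong\Z_2^{p(\G)+q(\G)}$.
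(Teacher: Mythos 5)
Your final paragraph does give a correct upper bound by a route different from the paper's: tensoring the Hopf exact sequence for $N(\G)$ with $\Z_2$ and invoking Theorem \ref{CE}, together with torsion-freeness of $H_1(A(\G);\Z)$ and the universal coefficient theorem, yields $\dim_{\Z_2}H_2(A(\G);\Z_2)\le p(\G)+q(\G)$ (the paper instead gets this bound by explicitly constructing a generating set $\Omega(A)$ of $H_2(A;\Z)$ with at most $p+q$ elements). The genuine gap is the matching lower bound, which you defer to a ``transport'' of Howlett's identities from the Coxeter side to the Artin side --- and that transport runs in the wrong direction. Under your identification $F(\Sigma)\cong F(S)$ one has $N_A\subseteq N_W$ and $[F,N_A]\subseteq[F,N_W]$, so the natural map is $N_A/[F,N_A]\to N_W/[F,N_W]$, i.e.\ $p_*$ under the naturality of Hopf's formula (Proposition \ref{nat}); an identity among braid-relator classes that holds modulo $[F,N_W]$ need not hold modulo the smaller subgroup $[F,N_A]$, and torsion in $H_2(W;\Z)$ gives no a priori nonzero classes, let alone torsion, upstairs. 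What you actually need is the forward statement that $p_*:H_2(A;\Z)\to H_2(W;\Z)$ is \emph{onto}; combined with your surjection from $H_2(N(\G);\Z)$ this sandwiches $H_2(A;\Z)\otimes\Z_2$ between two copies of $\Z_2^{p+q}$. Note also that your stronger expectation $H_2(A;\Z)\cong\Z_2^{p}\oplus\Z^{q}$ with torsion classes ``surviving with the correct order'' is neither needed nor established: mod $2$ dimensions cannot certify integral structure (compare $\Z\twoheadrightarrow\Z_4$, which preserves the mod $2$ count), and the paper itself obtains the integral answer only under the extra hypotheses of Corollary \ref{cor}.

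The surjectivity of $p_*$ is exactly where the removed square relators bite, and your plan to ``isolate the contribution of the removed square relators and verify that no further collapse occurs'' is the entire content of the paper's Subsections \ref{HtoC}--\ref{HtoA}, not a residual verification. Concretely, the Howlett basis classes attached to cycles of $\G_{odd}$ have Hopf representatives $\prod R(s,t)^{n(s,t)}\prod Q(s)^{n(s)}$ in which the exponents $n(s,t)$ need only form a \emph{mod $2$} cycle, the relators $Q(s)=s^2$ absorbing the even boundary; such words do not even lie in $N_A$, so they are not visibly in the image of $p_*$. The paper eliminates the $Q(s)$'s by proving that for odd $m=m(s,t)$ the word $\left((st)_m(ts)_m^{-1}\right)^2s^{-2}t^2$ lies in $[F_W,N_W]$ --- a consequence of $H_2(D_{2m};\Z)=0$ applied via Hopf's formula to the parabolic subgroup $W_{\{s,t\}}$ --- which gives the isomorphism $\mathcal{C}_W/\mathcal{D}_W\cong Z_1(\G_{odd};\Z_2)$ and shows that every such class has a $Q$-free representative $\prod R(s,t)^{n(s,t)}$ with $(n(s,t))$ an \emph{integral} cycle lifting the mod $2$ cycle; these manifestly lift along $\widetilde{p}$ to $\prod R(a_s,a_t)^{n(s,t)}\in N_A\cap[F_A,F_A]$. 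Without this step (or an equivalent device), your argument establishes only the inequality $\dim_{\Z_2}H_2(A(\G);\Z_2)\le p(\G)+q(\G)$, not the theorem.
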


The outline of our proof is as follows. In Subsection \ref{HCE}, we state Howlett's theorem on the second integral homology group $H_2(W(\G);\Z)$ of the Coxeter group $W(\G)$. Next in Subsection \ref{HF}, we recall Hopf's formula of the second homology of a group. The key of the proof is that, by virtue of Hopf's formula, we are able to find explicitly a set $\Omega(W)$ of generators of $H_2(W(\G);\Z)$ (Subsection \ref{HtoC}), as well as a set $\Omega(A)$ of generators of $H_2(A(\G);\Z)$ (Subsection \ref{HtoA}). On the other hand, Howlett's theorem implies that $\Omega(W)$ forms a basis of $H_2(W(\G);\Z)$, which is an elementary abelian $2$-group of rank $p(\G)+q(\G)$. Furthermore, we will show that the homomorphism $p_*:H_2(A(\G);\Z)\to H_2(W(\G);\Z)$ induced by the projection $p:A(\G)\to W(\G)$ maps $\Omega(A)$ onto $\Omega(W)$. Hence $p_*$ is actually an epimorphism and becomes an isomorphism when tensored with $\Z_2$.

\subsection{Howlett's theorem}\label{HCE}
As mentioned in the previous paragraph, we shall study the homomorphim $p_*:H_2(A(\G);\Z)\to H_2(W(\G);\Z)$ induced by the projection $p:A(\G)\to W(\G)$. A reason for doing so is that we have the following Howlett's theorem.

\begin{thm}[\cite{Howlett1988}]\label{How}
The second integral homology of the Coxeter group $W(\G)$ associated to a Coxeter graph $\G$ is
\[
H_2(W(\G);\Z)\cong\Z_2^{p(\G)+q(\G)},
\]
where $p(\G)$ and $q(\G)$ are as in Theorem \ref{CE}.
\end{thm}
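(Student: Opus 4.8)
The final statement is Howlett's computation of the Schur multiplier of a Coxeter group, which the rest of the paper takes as input. The plan is to re-derive it in the same spirit as the Artin-group argument, namely through Hopf's formula applied to the presentation of Definition \ref{cox}. Write $W=F/N$, where $F$ is the free group on $S$ and $N$ is the normal closure of $Q_W\cup R_W$. Hopf's formula gives
\[
H_2(W;\Z)\cong\frac{N\cap[F,F]}{[F,N]}.
\]
The relation module $N/[F,N]$ is an abelian group, generated by the classes $[Q(s)]$ $(s\in S)$ and $[R(s,t)]$ $(m(s,t)<\infty)$ of the defining relators, and the inclusion $N\hookrightarrow F$ induces $\pi\colon N/[F,N]\to F^{\mathrm{ab}}\cong\bigoplus_{s\in S}\Z\,e_s$ whose kernel is precisely $H_2(W;\Z)$. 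The whole problem is thus to compute $\ker\pi$.

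First I would record the images under $\pi$: one has $\pi[Q(s)]=2e_s$, while $\pi[R(s,t)]=0$ when $m(s,t)$ is even and $\pi[R(s,t)]=e_s-e_t$ when $m(s,t)$ is odd (a direct count of letters). Three clean sources of kernel elements then emerge. The even relators $[R(s,t)]$ already lie in $\ker\pi$; these split into the commuting pairs $P(\G)$ (label $m=2$) and the pairs of $Q(\G)\setminus P(\G)$ (label $\geq4$ even). The odd relators are not individually in $\ker\pi$, but any cycle $s_0,s_1,\dots,s_k=s_0$ in $\G_{odd}$ yields $\sum_i[R(s_{i-1},s_i)]\mapsto\sum_i(e_{s_{i-1}}-e_{s_i})=0$, so a basis of $H_1(\G_{odd};\Z)$ produces $q_3(\G)$ further kernel classes. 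Already the numbers $q_2(\G)$ and $q_3(\G)$, together with a provisional count $\#P(\G)$ of commuting-pair classes, are visible.

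Next I would analyse the relations holding in $N/[F,N]$, that is, trivialise $F$-conjugation on the relators, and I expect this to be the decisive step. The relations are \emph{local}: conjugating a relator supported on $\{s,t\}$ by a third generator $v$ produces, modulo $[F,N]$, an identity whose verification only involves the rank-$\leq3$ parabolic $W_{\{s,t,v\}}$. Two consequences must be extracted. First, for $m(s,t)$ odd one obtains $2[R(s,t)]\equiv[Q(s)]-[Q(t)]$, so that the $Q$-relators force $2$-torsion throughout: each cycle class is then $2$-torsion by telescoping $\sum_i([Q(s_{i-1})]-[Q(s_i)])=0$, and each even relator satisfies $2[R(s,t)]\equiv0$, reflecting $H_2(I_2(m))\cong\Z_2$ for even $m$ (and $H_2(I_2(m))=0$ for odd $m$, consistent with single odd edges contributing nothing). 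Second, for a commuting pair $\{s,t\}$ and a vertex $t'$ with $m(t,t')$ odd, the rank-$3$ identity in $W_{\{s,t,t'\}}$ identifies $[R(s,t)]$ with $[R(s,t')]$ up to $2$-torsion; this is exactly the relation $\sim$ on $P(\G)$, collapsing the commuting-pair classes from $\#P(\G)$ down to $\#\bigl(P(\G)/\!\sim\bigr)=p(\G)+q_1(\G)$.

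Assembling the survivors—one $\Z_2$ for each $\sim$-class of commuting pairs, one for each even edge of label $\geq4$, and one for each independent cycle of $\G_{odd}$—gives $H_2(W;\Z)\cong\Z_2^{\,p(\G)+q_1(\G)+q_2(\G)+q_3(\G)}=\Z_2^{\,p(\G)+q(\G)}$. The main obstacle is precisely this relation analysis: one must show that the rank-$3$ parabolics impose \emph{exactly} the $\sim$-identifications and the order-two relations and nothing more, so that the listed classes are not merely spanning but independent. Controlling these Peiffer-type identities uniformly over all Coxeter graphs—and in particular certifying that the global contribution $q_3=\rank H_1(\G_{odd};\Z)$ survives intact while the commuting-pair classes collapse—is the crux. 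Howlett himself bypasses Hopf's formula, carrying out the count through the geometry of the reflection representation, so reconciling that computation with the presentation-theoretic one above is where the real work would lie. Note also that the torsion/non-torsion split of $P(\G)/\!\sim$, invisible here since the $Q(s)$ make every class $2$-torsion, is exactly what distinguishes $\Z_2$ from $\Z$ in the $K(\pi,1)$-based Theorem \ref{CE}.
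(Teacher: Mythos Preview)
The paper does not prove this theorem; it is quoted from Howlett's 1988 paper and used as a black box. What the paper \emph{does} prove, in Subsection~\ref{HtoC}, is exactly the ``spanning'' half of your sketch: it constructs the sets $\Omega_1(W),\Omega_2(W),\Omega_3(W)$ of total size at most $p(\G)+q(\G)$ and shows (Theorem~\ref{OmegaW}) that they generate $H_2(W;\Z)$. The individual ingredients you list---the identification $\langle s,t\rangle_W=\langle s,t'\rangle_W$ under $\sim$, the relation $2\langle R(s,t)\rangle_W=\langle Q(s)\rangle_W-\langle Q(t)\rangle_W$ for odd $m(s,t)$ extracted from $H_2(D_{2m})=0$, the passage from odd-edge products to $Z_1(\G_{odd};\Z_2)$---are all carried out there, by essentially the arguments you outline. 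So on the upper-bound side your proposal and the paper agree.

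The gap you yourself identify is real and is \emph{not} filled by the paper either. The paper's Theorem~\ref{OmegaW} concludes that $\Omega(W)$ is a basis only by invoking Howlett's rank count $p(\G)+q(\G)$; without that input the Hopf-formula analysis gives generators but no independence. Your proposal to certify independence by a complete analysis of the Peiffer identities in rank~$\leq 3$ parabolics is a reasonable strategy in principle, but it is a substantial project (one must show no further collapses occur, uniformly over all Coxeter matrices), and neither you nor the paper carries it out. Howlett's own proof proceeds differently, via the reflection representation and explicit $2$-cocycles, which supplies the lower bound directly. In short: your outline reproduces the paper's Subsection~\ref{HtoC} for the upper bound, correctly isolates the missing lower bound, and correctly notes that an independent Hopf-formula proof of Howlett's theorem would require work that is not present here.
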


\begin{rem}
The original statement in \cite{Howlett1988} was 
\[
H_2(W(\G);\Z)\cong\Z_2^{-n_1(\G)+n_2(\G)+n_3(\G)+n_4(\G)},
\]
where
\begin{align*}
n_1(\G)&:=\#S,\\
n_2(\G)&:=\#\{\{s,t\}\in E(\G)\mid m(s,t)<\infty\},\\
n_3(\G)&:=\#P(\G)/\sim,\\
n_4(\G)&:=\rank H_0(\G_{odd};\Z).
\end{align*}
For a Coxeter graph $\G$, the above numbers are related to those used by Clancy-Ellis as follows
\[
-n_1(\G)+n_2(\G)+n_3(\G)+n_4(\G)=p(\G)+q(\G).
\]
In fact, $n_1(\G)=\#V(\G_{odd})$, $n_2(\G)=q_2(\G)+\# E(\G_{odd})$ and $n_3(\G)=p(\G)+q_1(\G)$. The above equation follows from the Euler-Poincar\'e theorem applied to $\G_{odd}$,
\[
\#V(\G_{odd})-\#E(\G_{odd})=\rank H_0(\G_{odd};\Z)-\rank H_1(\G_{odd};\Z).
\]
\end{rem}


\begin{ex}\label{di}
We shall make use of the following example later.

Let $\G=I_2(m)$. Thus $W(\G)=D_{2m}$ is the dihedral group of order $2m$. Theorem \ref{How} shows that
\begin{eqnarray*}
H_2(W(\G);\Z)\cong
\begin{cases}
\Z_2, &m \text{ is even};\cr
0, &m \text{ odd}.
\end{cases}
\end{eqnarray*}
See Corollary 10.1.27 of \cite{Karpilovsky1993} for a complete list of integral homology of dihedral groups.
\end{ex}

\subsection{Hopf's formula}\label{HF}
Hopf's formula gives a description of the second integral homology of a group. We first recall some notations. For a group $G$, the commutator of $x,y\in G$ is the element $[x,y]=xyx^{-1}y^{-1}$. The commutator subgroup $[G,G]$ of $G$ is the subgroup of $G$ generated by all commutators. In general, we define $[H,K]$ as the subgroup of $G$ generated by $[h,k],h\in H, k\in K$ for any subgroups $H$ and $K$ of $G$.
\begin{thm}[Hopf's formula]\label{Hopf}
If a group $G$ has a presentation $\langle S\mid R\rangle$, then
\[
H_2(G;\Z)\cong\frac{N\cap[F,F]}{[F,N]},
\]
where $F=F(S)$ is the free group generated by $S$ and $N=N(R)$ is the normal closure of $R$ (subgroup of $F$ normally generated by the relation set $R$).
\end{thm}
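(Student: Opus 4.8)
The plan is to deduce Hopf's formula from the five-term exact sequence in homology attached to the group extension $1\to N\to F\to G\to 1$, exploiting crucially the freeness of $F$.

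First I would invoke the Lyndon--Hochschild--Serre spectral sequence $E^2_{pq}=H_p(G;H_q(N;\Z))\Rightarrow H_{p+q}(F;\Z)$ of this extension, where $G=F/N$ acts on $H_*(N;\Z)$ through conjugation in $F$. Reading off the low-degree terms yields the exact sequence
\[
H_2(F;\Z)\to H_2(G;\Z)\xrightarrow{d} H_0(G;H_1(N;\Z))\to H_1(F;\Z)\to H_1(G;\Z)\to 0.
\]
Since $F$ is free it has a one-dimensional classifying space (a wedge of circles), so $H_2(F;\Z)=0$. Exactness then forces the transgression $d$ to be injective and identifies $H_2(G;\Z)$ with the kernel of the subsequent map.

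Next I would make the two middle terms explicit. We have $H_1(N;\Z)=N^{\mathrm{ab}}=N/[N,N]$, and the $G$-action is induced by $F$-conjugation; passing to coinvariants kills all commutators $[f,n]$ with $f\in F$ and $n\in N$. Because $[N,N]\subseteq[F,N]$, these commutators together generate $[F,N]$, whence
\[
H_0(G;H_1(N;\Z))=(N^{\mathrm{ab}})_G\cong N/[F,N].
\]
Similarly $H_1(F;\Z)=F/[F,F]$, and the map $N/[F,N]\to F/[F,F]$ is the one induced by the inclusion $N\hookrightarrow F$, namely $n[F,N]\mapsto n[F,F]$. Its kernel consists of the classes of those $n\in N$ that also lie in $[F,F]$, that is $(N\cap[F,F])/[F,N]$ (note $[F,N]\subseteq N\cap[F,F]$, so the quotient is well defined). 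Combining the injectivity of $d$ with exactness gives
\[
H_2(G;\Z)\cong\frac{N\cap[F,F]}{[F,N]},
\]
as claimed.

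The main obstacle, and the only nonformal input, is the construction and naturality of the five-term exact sequence (equivalently, the correct identification of the edge and transgression maps in the spectral sequence), together with the coinvariant computation $H_0(G;H_1(N;\Z))\cong N/[F,N]$; once these standard homological facts are granted, the remainder is a direct diagram chase. As an alternative that sidesteps spectral sequences, one could argue topologically: realize the presentation $\langle S\mid R\rangle$ by its two-dimensional presentation complex $X$ with $\pi_1(X)\cong G$, build a $K(G,1)$ by attaching cells of dimension $\geq 3$, and identify $H_2(G;\Z)=H_2(K(G,1);\Z)$ with the cokernel of the Hurewicz map $\pi_2(X)\to H_2(X;\Z)$; translating this cokernel into group-theoretic terms recovers the same quotient $(N\cap[F,F])/[F,N]$.
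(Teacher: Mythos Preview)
Your argument via the five-term exact sequence of the Lyndon--Hochschild--Serre spectral sequence is correct and is one of the standard proofs of Hopf's formula: the vanishing of $H_2(F;\Z)$ for free $F$, the identification $H_0(G;H_1(N;\Z))\cong N/[F,N]$ via coinvariants, and the computation of the kernel of $N/[F,N]\to F/[F,F]$ are all carried out properly.

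The paper, however, does not give its own proof of this theorem; it simply records Hopf's formula as a classical result and refers the reader to Section~II.5 of Brown's \emph{Cohomology of Groups} for a topological proof. That topological argument is essentially the alternative you sketch in your final paragraph: realize the presentation by its $2$-complex $X$, attach higher cells to obtain a $K(G,1)$, and identify $H_2(G;\Z)$ with the cokernel of the Hurewicz map $\pi_2(X)\to H_2(X;\Z)$, then translate this into the group-theoretic quotient. So your primary route (spectral sequence) is different from the cited reference, while your alternative route is the one Brown takes. The spectral-sequence approach has the advantage of being purely algebraic and of making the naturality statement (your Proposition~\ref{nat}) immediate from the functoriality of the LHS spectral sequence; the topological approach is more concrete and ties in directly with the exact sequence~\eqref{Hurewicz} used later in the paper.
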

See Section II.5 of \cite{Brown1982} for a topological proof. Moreover, Hopf's formula admits the following naturality (see Section II.6, Exercise 3(b) of \cite{Brown1982}).
\begin{prop}\label{nat}
	Let $G=F/N=\langle S\mid R\rangle$ and $G^{\prime}=F^{\prime}/N^{\prime}=\langle S^{\prime}\mid R^{\prime}\rangle$ as in Theorem \ref{Hopf}. Suppose a homomorphism $\alpha:G\to G^{\prime}$ lifts to $\widetilde{\alpha}:F\to F^{\prime}$. Then the following diagram commutes,
	\[
\begin{tikzpicture}
  \matrix (m) [matrix of math nodes,row sep=3em,column sep=4em,minimum width=2em]
  {
     H_2(G;\Z) & N\cap[F,F/[F,N] \\
     H_2(G^{\prime};\Z) & N^{\prime}\cap[F^{\prime},F^{\prime}]/[F^{\prime},N^{\prime}] \\};
  \path[-stealth]
    (m-1-1) edge node [left] {$H_2(\alpha)$} (m-2-1)
    (m-1-1) edge node [below] {$\cong$} (m-1-2)
    (m-2-1) edge node [below] {$\cong$} (m-2-2)
    (m-1-2) edge node [right] {$\alpha_*$} (m-2-2);
\end{tikzpicture}
\]
where $\alpha_*$ is induced by $\widetilde{\alpha}$.
\end{prop}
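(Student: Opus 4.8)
The plan is to deduce the whole diagram from a single naturality statement, that of the five-term exact sequence of a group extension, after first disposing of the routine well-definedness of $\alpha_*$. Since $\alpha\colon G\to G'$ is a homomorphism lifted by $\widetilde\alpha\colon F\to F'$, each relator $r\in R$ maps to an element $\widetilde\alpha(r)\in F'$ whose image in $G'$ is $1$, so $\widetilde\alpha(r)\in N'$; as $\widetilde\alpha$ is a homomorphism this gives $\widetilde\alpha(N)\subseteq N'$, and likewise $\widetilde\alpha([F,F])\subseteq[F',F']$ and $\widetilde\alpha([F,N])\subseteq[F',N']$. Hence $\widetilde\alpha(N\cap[F,F])\subseteq N'\cap[F',F']$, and passing to quotients produces the edge map $\alpha_*\colon (N\cap[F,F])/[F,N]\to (N'\cap[F',F'])/[F',N']$.

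Next I would recall Hopf's isomorphism in a form that is manifestly functorial. The extension $1\to N\to F\to G\to 1$ yields the five-term exact sequence
$$H_2(F;\Z)\to H_2(G;\Z)\to N/[F,N]\to F^{\mathrm{ab}}\to G^{\mathrm{ab}}\to 0,$$
in which the middle term is the coinvariant group $(N^{\mathrm{ab}})_G=N/[F,N]$ and the arrow out of it is induced by the inclusion $N\hookrightarrow F$. Because $F$ is free, $H_2(F;\Z)=0$, so $H_2(G;\Z)$ is identified with the kernel of $N/[F,N]\to F^{\mathrm{ab}}$, namely $(N\cap[F,F])/[F,N]$. This is exactly Hopf's formula, and it is this identification that I take as the top horizontal isomorphism of the diagram (and its primed analogue as the bottom one).

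The heart of the argument is then naturality. The lift $\widetilde\alpha$ assembles the two extensions into a morphism of short exact sequences, given by $\widetilde\alpha|_N$, $\widetilde\alpha$, and $\alpha$ on kernels, total groups, and quotients respectively. A morphism of extensions induces a morphism of the associated Lyndon--Hochschild--Serre spectral sequences, hence a commuting ladder between the two five-term sequences. The pertinent rung of this ladder has left vertical arrow $H_2(\alpha)$, right vertical arrow the map $N/[F,N]\to N'/[F',N']$ induced by $\widetilde\alpha$ on coinvariants, and horizontal arrows the injections identifying $H_2$ with the kernel inside the coinvariants. Restricting the right vertical arrow to these kernels recovers precisely $\alpha_*$, so commutativity of the rung is exactly commutativity of the square in the statement.

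The step that needs genuine care is the naturality of the five-term sequence: one must verify that a morphism of extensions induces a map of Lyndon--Hochschild--Serre spectral sequences compatible with the relevant edge and transgression maps. This is classical but is the only non-formal ingredient, the rest being commutator bookkeeping. If one prefers to avoid spectral sequences, the same conclusion follows from the topological proof of Hopf's formula cited after Theorem \ref{Hopf}: realize each presentation by its $2$-complex, extend to $K(G,1)$ and $K(G',1)$, and note that $\widetilde\alpha$ is realized by a cellular map of $2$-skeleta inducing $\alpha$ on $\pi_1$; functoriality of homology then supplies the naturality, with the chain-level description of Hopf's isomorphism turning the induced map into $\alpha_*$. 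Either way the only real point is that a lift $\widetilde\alpha$ of $\alpha$, and not $\alpha$ alone, is what both defines the right-hand map and forces the square to commute.
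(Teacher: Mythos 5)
Your proof is correct, but note that the paper itself offers no proof of this proposition: it simply cites Section II.6, Exercise 3(b) of Brown's book, so your argument genuinely fills in what the authors leave to a reference. Your route --- identifying Hopf's isomorphism as the edge identification coming from the five-term exact sequence of $1\to N\to F\to G\to 1$ (using $H_2(F;\mathbb{Z})=0$ for $F$ free) and then invoking naturality of that sequence under the morphism of extensions $(\widetilde{\alpha}|_N,\widetilde{\alpha},\alpha)$ --- is exactly the standard argument that Brown's exercise intends, and you correctly isolate the one non-formal ingredient, namely compatibility of a morphism of extensions with the Lyndon--Hochschild--Serre edge maps. Two small streamlinings: the containment $\widetilde{\alpha}(N)\subseteq N'$ follows in one line from the commuting square $\pi'\circ\widetilde{\alpha}=\alpha\circ\pi$, since $\widetilde{\alpha}(\ker\pi)\subseteq\ker\pi'$, with no need to argue relator-by-relator through the normal closure; and in your topological alternative (which parallels the proof of Hopf's formula in Section II.5 of Brown that the paper cites), the extension of the map over the $2$-cells deserves the one-sentence justification you implicitly use, namely that each boundary word $\widetilde{\alpha}(r)$ lies in $N'$ and is therefore null-homotopic in the presentation complex of $G'$. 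The closing observation --- that it is the lift $\widetilde{\alpha}$, not $\alpha$ alone, that both defines $\alpha_*$ and forces commutativity --- is exactly the right point of emphasis, since different lifts induce the same map only after passing to the quotient by $[F',N']$.
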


For simplicity we denote by $\langle x\rangle_G=x[F,N]\in F/[F,N]$ the coset of $[F,N]$ represented by $x\in F$ and $\langle x,y\rangle_G=[x,y][F,N]\in [F,F]/[N,F]$ for $x,y\in F$. Thanks to Hopf's formula, second homology classes of $G$ can be regarded as $\langle x\rangle_G$ for $x\in N\cap [F,F]$.

To see how the representatives look like, we make the following simple observations, which we learned from \cite{Korkmaz2003}.
\begin{lem}\label{abelian}
The group $N/[F,N]$ is abelian.
\end{lem}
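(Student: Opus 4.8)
The plan is to reduce the statement to the single containment $[N,N]\subseteq[F,N]$, since a quotient $N/[F,N]$ is abelian precisely when all commutators of elements of $N$ die in the quotient. So the whole lemma comes down to comparing two commutator subgroups.

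First I would check that the quotient is even well-defined. For this I note that $[F,N]\subseteq N$: given $x\in F$ and $n\in N$, normality of $N$ in $F$ gives $xnx^{-1}\in N$, hence $[x,n]=xnx^{-1}n^{-1}\in N$, and since these elements generate $[F,N]$ we get $[F,N]\le N$. Moreover $[F,N]$ is normal in $F$ (being the subgroup generated by commutators of the two normal subgroups $F$ and $N$), so in particular it is normal in $N$, and the quotient group $N/[F,N]$ makes sense.

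Next I would establish abelianness. It suffices to show that every generating commutator of $[N,N]$ lies in $[F,N]$. Take $n_1,n_2\in N$. Since $N\subseteq F$, the element $n_1$ also lies in $F$, so the commutator $[n_1,n_2]$ has its first entry in $F$ and its second entry in $N$; by the very definition of $[F,N]$ as the subgroup generated by all $[x,n]$ with $x\in F$ and $n\in N$, we conclude $[n_1,n_2]\in[F,N]$. Hence $[N,N]\subseteq[F,N]$, and therefore the images of any two elements of $N$ commute in $N/[F,N]$, proving the lemma.

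There is no real obstacle here: the argument is a one-line inclusion once the defining data are unwound, and the only point requiring a moment of care is confirming that $[F,N]$ is a normal subgroup of $N$ so that the quotient is a genuine group. The content of the lemma is simply that enlarging one factor of a commutator subgroup from $N$ to the ambient $F$ can only enlarge the resulting subgroup, so $[N,N]\le[F,N]$ automatically.
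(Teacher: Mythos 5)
Your proof is correct and is essentially the paper's argument: the paper's one-line observation that $N/[F,N]$ is a quotient of the abelianization $N/[N,N]$ amounts precisely to the inclusion $[N,N]\subseteq[F,N]$, which you verify directly from $N\subseteq F$. Your additional check that $[F,N]\subseteq N$ and is normal is a reasonable (if routine) piece of bookkeeping the paper leaves implicit.
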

\begin{proof}
Note that $N/[F,N]$ is a quotient group of $N/[N,N]$ and the latter is the abelianization of $N$.
\end{proof}
Thus we write the group $N/[F,N]$ additively. It is clear $\langle n\rangle_G=-\langle n^{-1}\rangle_G$ for $n\in N$.
\begin{lem}\label{conj}
In the abelian group $N/[F,N]$, we have
\[
\langle n\rangle_G=\langle fnf^{-1}\rangle_G
\]
for $n\in N$ and $f\in F$.
\end{lem}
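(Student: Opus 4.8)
The plan is to work entirely inside the abelian group $N/[F,N]$ and show that conjugation by an element of $F$ acts trivially on the class $\langle n\rangle_G$ of any $n\in N$. The key observation is that the difference between $fnf^{-1}$ and $n$ is precisely a commutator of the form $[f,n]$, and since $n\in N$ and $f\in F$, this commutator lies in $[F,N]$, which is exactly the subgroup we are quotienting by. So the whole argument reduces to making this identity transparent and checking the membership.

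First I would write the group $N/[F,N]$ additively, as justified by Lemma \ref{abelian}. Then I would compute the product $fnf^{-1}\cdot n^{-1}$ in $F$: by the definition of the commutator given in the text, $fnf^{-1}n^{-1}=[f,n]$. Since $f\in F$ and $n\in N$, the element $[f,n]$ belongs to $[F,N]$ by definition of $[F,N]$ as the subgroup generated by commutators of elements of $F$ with elements of $N$. Translating back into the additive notation of $N/[F,N]$, the relation $fnf^{-1}=[f,n]\cdot n$ becomes $\langle fnf^{-1}\rangle_G=\langle n\rangle_G$, because $[f,n]$ represents the zero element. This gives the claim.

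One small technical point I would want to verify carefully is that $fnf^{-1}$ indeed lies in $N$, so that $\langle fnf^{-1}\rangle_G$ is a well-defined element of $N/[F,N]$ in the first place; this is immediate since $N$ is normal in $F$. The only other thing to keep straight is the bookkeeping between the multiplicative structure of $F$ and the additive notation on the quotient, but this is routine. I do not anticipate any genuine obstacle here: the statement is essentially the assertion that $N$ becomes central modulo $[F,N]$, and the proof is a one-line commutator manipulation. The main thing to get right is simply the direction and placement of the commutator so that its membership in $[F,N]$ is manifest.
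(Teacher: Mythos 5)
Your proof is correct and is essentially the paper's own argument: both reduce the claim to the identity $fnf^{-1}n^{-1}=[f,n]\in[F,N]$ and read it off additively in $N/[F,N]$, with your check that $fnf^{-1}\in N$ being a harmless extra remark. No gaps.
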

\begin{proof}
Since $[f,n]\in[F,N]$, $\langle f,n\rangle_G=\langle fnf^{-1}n^{-1}\rangle_G=\langle fnf^{-1}\rangle_G-\langle n\rangle_G=0$.
\end{proof}

Therefore a coset in $N/[F,N]$ is represented by an element of the form $\prod_{r\in R}r^{n(r)}~(n(r)\in\Z)$. Hopf's formula implies that a second homology class of $G$ can be represented by an element $\prod_{r\in R}r^{n(r)}\in[F,F]$.

The next lemma is useful.
\begin{lem}\label{comm}
Let $G=F/N$ be as in Theorem \ref{Hopf}. If $x,y,z\in F$ such that $[x,y],[x,z]\in N\cap[F,F]$, then
\[
\langle x, yz\rangle_G=\langle x,y\rangle_G+\langle x,z\rangle_G,~~~ \langle x,y^{-1}\rangle_G=-\langle x,y\rangle_G,
\]
\end{lem}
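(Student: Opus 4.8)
The plan is to reduce both identities to a single elementary commutator identity in the free group $F$ and then push that identity into the abelian quotient $N/[F,N]$ using the two preceding lemmas. First I would record the standard identity
\[
[x,yz]=[x,y]\cdot\bigl(y[x,z]y^{-1}\bigr),
\]
which holds in any group and is verified by direct expansion, since both sides equal $xyzx^{-1}z^{-1}y^{-1}$. The hypotheses $[x,y],[x,z]\in N$ together with the normality of $N$ guarantee that every term appearing here, including $[x,yz]$ and the conjugate $y[x,z]y^{-1}$, lies in $N\cap[F,F]$, so that all the bracket symbols $\langle\,\cdot\,\rangle_G$ are defined.

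Next I pass to $N/[F,N]$, which is abelian by Lemma \ref{abelian} and which we write additively. Since the projection $N\to N/[F,N]$ is a homomorphism onto an abelian group, the displayed product identity becomes
\[
\langle x,yz\rangle_G=\langle x,y\rangle_G+\langle y[x,z]y^{-1}\rangle_G.
\]
Applying Lemma \ref{conj} to the conjugate $y[x,z]y^{-1}$ replaces it by $[x,z]$, which yields the first claimed formula $\langle x,yz\rangle_G=\langle x,y\rangle_G+\langle x,z\rangle_G$.

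For the second identity I would specialize $z=y^{-1}$. A short check using $[x,y^{-1}]=y^{-1}[x,y]^{-1}y$ and the normality of $N$ shows that $[x,y^{-1}]\in N\cap[F,F]$, so the first formula applies with this choice of $z$. Since then $yz=1$ and $[x,1]=1$ gives $\langle x,1\rangle_G=0$, we obtain $0=\langle x,y\rangle_G+\langle x,y^{-1}\rangle_G$, that is, $\langle x,y^{-1}\rangle_G=-\langle x,y\rangle_G$.

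The computation is entirely routine; the only point that requires a little care, and which I regard as the main (mild) obstacle, is the bookkeeping that ensures each element to which we attach a symbol $\langle\,\cdot\,\rangle_G$ genuinely lies in $N\cap[F,F]$, so that the brackets are well defined and Lemma \ref{conj} is applicable. This is precisely where the normality of $N$ and its closure under the relevant conjugations and inverses enter the argument.
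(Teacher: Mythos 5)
Your proof is correct and follows essentially the same route as the paper's: the same identity $[x,yz]=[x,y]\,y[x,z]y^{-1}$, passage to the abelian quotient $N/[F,N]$, and replacement of the conjugate $y[x,z]y^{-1}$ by $[x,z]$ (you cite Lemma \ref{conj}, while the paper reproves that step inline via $[[x,z]^{-1},y]\in[N,F]$). Your explicit specialization $z=y^{-1}$, including the check that $[x,y^{-1}]=y^{-1}[x,y]^{-1}y\in N\cap[F,F]$, simply spells out what the paper dismisses with ``the second follows immediately from the first.''
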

\begin{proof}
Note that $[x,yz]=[x,y]y[x,z]y^{-1}$. Then in the abelian group $N/[F,N]$,
\[
\langle x,yz\rangle_G=\langle x,y\rangle_G+\langle y[x,z]y^{-1}\rangle_G.
\]
The term $\langle y[x,z]y^{-1}\rangle_G=\langle x,z\rangle_G$ since
\[
[x,z]^{-1}y[x,z]y^{-1}=[[x,z]^{-1},y]\in [N,F].
\]
Hence the first equality holds. The second follows immediately from the first.
\end{proof}

\subsection{Hopf's formula applied to Coxeter groups}\label{HtoC}
The aim of this subsection is to construct an explicit set $\Omega(W)$ of generators of $H_2(W(\G);\Z)$. Combined with Howlett's theorem (Theorem \ref{How}), we show that $\Omega(W)$ is a basis of $H_2(W(\G);\Z)$.

Let us describe the construction of $\Omega(W)$. Let $\G$ be a Coxeter graph and $(W,S)$ the associated Coxeter system with $S$ totally ordered. Then $W=\langle S\mid R_W\cup Q_W\rangle$ is as in Definition \ref{cox}. Let $F_W=F(S)$ be the free group on $S$ and $N_W=N(R_W\cup Q_W)$ be the normal closure of $R_W\cup Q_W$. Therefore $W=F_W/N_W$. Using Hopf's formula we identify $H_2(W;\Z)\cong (N_W\cap[F_W,F_W])/[F_W,N_W]$. We shall construct three sets $\Omega_i(W)\subset (N_W\cap[F_W,F_W])/[F_W,N_W]~(i=1,2,3)$. In view of Lemma \ref{abelian} and Lemma \ref{conj}, a second homology class of $W$ is of the form $\langle x\rangle_W$ with $x$ expressed by a word $\prod_{R(s,t)\in R_W}R(s,t)^{n(s,t)}\prod_{Q(s)\in Q_W}Q(s)^{n(s)}\in[F_W,F_W]$. We decompose $\langle x\rangle_W=\langle x_1\rangle_W+\langle x_2\rangle_W+\langle x_3\rangle_W$ as in the proof of Theorem \ref{OmegaW}, such that $\langle x_i\rangle_W$ is generated by $\Omega_i(W)$. Then $\Omega(W)=\Omega_1(W)\cup\Omega_2(W)\cup\Omega_3(W)$ generates $H_2(W;\Z)$. Now we exhibit respectively the constructions of $\Omega_i(W)~(i=1,2,3)$.

\subsubsection{Construction of $\Omega_1(W)$}
Let
\[
\Omega_1(W)=\{\langle s,t\rangle_W\mid s,t\in S, s<t, m(s,t)=2\}.
\]
Recall that $\langle s,t\rangle_W=[s,t][F_W,N_W]\in (N_W\cap[F_W,F_W])/[F_W,N_W]$ and $R(s,t)=[s,t]$ when $m(s,t)=2$. Note that the above expression may have repetitions. In fact, we have the following.
\begin{prop}
    $\#\Omega_1(W)\leq p(\G)+q_1(\G)$.
\end{prop}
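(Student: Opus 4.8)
The plan is to exhibit $\Omega_1(W)$ as the image of a map out of $P(\G)$ that factors through $P(\G)/\sim$; since $\#(P(\G)/\sim)=p(\G)+q_1(\G)$ by definition, this immediately yields the bound. Concretely, consider the assignment $\{s,t\}\mapsto\langle s,t\rangle_W$ defined on pairs with $m(s,t)=2$, whose image is exactly $\Omega_1(W)$. As $\sim$ is generated by the elementary relation identifying $\{s,t\}$ and $\{s,t'\}$ whenever they share the vertex $s$ and $m(t,t')$ is odd, it suffices to prove the key claim: if $m(s,t)=m(s,t')=2$ and $m(t,t')$ is odd, then $\langle s,t\rangle_W=\langle s,t'\rangle_W$.

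For the key claim, recall that when $m(t,t')$ is odd the reflections $t$ and $t'$ are conjugate inside the dihedral parabolic $W_{\{t,t'\}}$; explicitly there is a word $w$ in the letters $t,t'$ (arising from the odd relation $R(t,t')$, e.g.\ $w=(tt')_{m(t,t')-1}$) with $wtw^{-1}=t'$ in $W$. I would then evaluate $\langle s,wtw^{-1}\rangle_W$ in two ways. Since $s$ commutes in $W$ with both $t$ and $t'$, it commutes modulo $N_W$ with $w$, so each of $[s,w]$, $[s,t]$, $[s,w^{-1}]$ lies in $N_W\cap[F_W,F_W]$; iterating Lemma \ref{comm} and cancelling via its second identity gives
\[
\langle s,wtw^{-1}\rangle_W=\langle s,w\rangle_W+\langle s,t\rangle_W+\langle s,w^{-1}\rangle_W=\langle s,t\rangle_W.
\]
On the other hand, $n:=wtw^{-1}(t')^{-1}$ lies in $N_W$ (it maps to $1$ in $W$), so $wtw^{-1}=nt'$ and Lemma \ref{comm} yields $\langle s,wtw^{-1}\rangle_W=\langle s,n\rangle_W+\langle s,t'\rangle_W$. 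Crucially $\langle s,n\rangle_W=[s,n][F_W,N_W]=0$ straight from the definition of $[F_W,N_W]$, because $n\in N_W$. Comparing the two evaluations gives $\langle s,t\rangle_W=\langle s,t'\rangle_W$, proving the claim.

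Finally I must reconcile the ordering convention $s<t$ built into the definition of $\Omega_1(W)$, so that the assignment is well defined on unordered pairs. Since $\langle t,s\rangle_W=-\langle s,t\rangle_W$ (the group is abelian, Lemma \ref{abelian}), it is enough to know that these classes are $2$-torsion. This is immediate from Theorem \ref{How}, but can also be seen directly: as $s^2=Q(s)\in N_W$, the commutator $[s^2,t]$ lies in $[F_W,N_W]$, whence $0=\langle s^2,t\rangle_W=2\langle s,t\rangle_W$ by Lemma \ref{comm} together with antisymmetry. Thus $\langle s,t\rangle_W$ is independent of the order of its arguments, the assignment descends to $P(\G)/\sim$, and $\#\Omega_1(W)\le\#(P(\G)/\sim)=p(\G)+q_1(\G)$.

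I expect the main obstacle to be the bookkeeping inside the key claim: one must fix the conjugating word $w$ explicitly in the letters $t,t'$ and, at every application of Lemma \ref{comm}, confirm that the commutators fed in genuinely lie in $N_W\cap[F_W,F_W]$ — membership in $N_W$ following from the fact that $s$ commutes with $t$ and $t'$ in $W$, and membership in $[F_W,F_W]$ being automatic for commutators. The decisive simplification is the vanishing $\langle s,n\rangle_W=0$ for $n\in N_W$, which is precisely what the subgroup $[F_W,N_W]$ is designed to kill.
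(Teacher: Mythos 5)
Your proof is correct and is essentially the paper's argument in different packaging: since $wt=(tt')_{m}$ and $t'w=(t't)_{m}$ for $m=m(t,t')$ odd, your element $n=wtw^{-1}(t')^{-1}$ is exactly the relator $R(t,t')$, so your two evaluations of $\langle s,wtw^{-1}\rangle_W$ reproduce the paper's computation $\langle s,t\rangle_W-\langle s,t'\rangle_W=\langle s,R(t,t')\rangle_W=0$, resting on the same two ingredients (the bilinearity of Lemma \ref{comm} and the vanishing $\langle f,n\rangle_G=0$ for $n\in N$, i.e.\ Lemma \ref{conj}). One genuine bonus: your direct $2$-torsion argument $2\langle s,t\rangle_W=\langle s^2,t\rangle_W=0$ (valid since $s^2=Q(s)\in N_W$ gives $[s^2,t]\in[F_W,N_W]$) cleanly settles the well-definedness on unordered pairs in the mixed-ordering case, a point the paper passes over with ``Similarly.''
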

\begin{proof}
	We shall show that $\langle s,t \rangle_W=\langle s,t^{\prime}\rangle_W$ in $\Omega_1(W)$ if $\{s,t\}\equiv\{s,t^{\prime}\}$ in $P(\G)$. 
	Suppose $s<t$ and $s<t^{\prime}$ with $\{s,t\}\equiv\{s,t^{\prime}\}$ in $P(\G)$, that is $m(s,t)=m(s,t^{\prime})=2$ and $m(t,t^{\prime})$ is odd. Then in $N_W/[F_W,N_W]$,
\begin{align*}
\langle s,t\rangle_W-\langle s,t^{\prime}\rangle_W&=\langle s,R(t,t^{\prime})\rangle_W\\
&=\langle sR(t,t^{\prime})s^{-1}R(t,t^{\prime})^{-1}\rangle_W\\
&=\langle sR(t,t^{\prime})s^{-1}\rangle_W+\langle R(t,t^{\prime})^{-1}\rangle_W\\
&=\langle R(t,t^{\prime})\rangle_W-\langle R(t,t^{\prime})\rangle_W=0,
\end{align*}
where the first and the third equalities follow from Lemma \ref{comm}, the fourth from Lemma \ref{conj}. Similarly, $\langle s,t \rangle_W=\langle s^{\prime},t\rangle_W$ in $\Omega_1(W)$ if $\{s,t\}\equiv\{s^{\prime},t\}$ in $P(\G)$. Hence $\#\Omega_1(W)\leq \#\left(P(\G)/\sim\right)=p(\G)+q_1(\G)$.
\end{proof}

\subsubsection{Construction of $\Omega_2(W)$}
Let
\[
\Omega_2(W)=\{\langle R(s,t)\rangle_W\mid s,t\in S, s<t, m(s,t)\geq 4 \text{ is even}\}.
\]
Recall that $R(s,t)=(st)_{m(s,t)}(ts)_{m(s,t)}^{-1}$. Note that when $m(s,t)$ is even,  $R(s,t)$ is in the kernel of the abelianization map $\mathrm{Ab}:F_W\to F_W/[F_W,F_W]$ and hence $R(s,t)\in [F_W,F_W]$. The following is an obvious observation.
\begin{prop}
	$\#\Omega_2(W)\leq q_2(\G)$.
\end{prop}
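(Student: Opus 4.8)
The plan is simply to count the indexing set, since the statement is a purely combinatorial observation. By definition, $\Omega_2(W)$ is the image of the map
\[
(s,t)\longmapsto \langle R(s,t)\rangle_W
\]
defined on the set of ordered pairs $(s,t)$ with $s<t$ and $m(s,t)\geq 4$ even. First I would observe that the condition $s<t$ selects exactly one ordered representative for each unordered pair $\{s,t\}\subset S$ with $m(s,t)\geq 4$ even. Consequently the domain of this map is in bijection with $\{\{s,t\}\subset S\mid m(s,t)\geq 4 \text{ is even}\}$, whose cardinality is $q_2(\G)$ by the definition given in Theorem \ref{CE}. Since the cardinality of the image of a map never exceeds that of its domain, we conclude $\#\Omega_2(W)\leq q_2(\G)$.

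The only point worth flagging is the reason the bound is an inequality rather than an equality: I make no attempt to prove that the map above is injective, because distinct pairs might in principle yield the same class $\langle R(s,t)\rangle_W$ in $(N_W\cap[F_W,F_W])/[F_W,N_W]$. As the surrounding text indicates, this is an \emph{obvious observation} and there is no genuine obstacle; the entire content of the argument is the bijection between the indexing set defining $\Omega_2(W)$ and the collection of pairs counted by $q_2(\G)$, together with the elementary fact that passing to an image can only decrease cardinality.
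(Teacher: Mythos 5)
Your proposal is correct and coincides with the paper's reasoning: the paper offers no written proof, labeling the statement ``an obvious observation,'' and the implicit argument is exactly your count --- the classes $\langle R(s,t)\rangle_W$ are indexed by the pairs $\{s,t\}$ with $m(s,t)\geq 4$ even, of which there are $q_2(\G)$, and possible coincidences among the classes can only make $\Omega_2(W)$ smaller. Your remark that injectivity is neither claimed nor needed is precisely why the statement is an inequality.
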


\subsubsection{Construction of $\Omega_3(W)$}
The construction of $\Omega_3(W)$ requires more preparations. Recall that $\G_{odd}$ is the subgraph of $\G$ considered as a $1$-dimensional CW-complex with $0$-cells $S$ and $1$-cells $\{\langle s,t\rangle\mid s,t\in S, s<t, m(s,t) \text{ odd}\}$ oriented by $\partial\langle s,t\rangle=t-s$. We define a group 
\[
\mathcal{C}_W=\{(\alpha,\beta)\in C_1(\G_{odd})\oplus 2C_0(\G_{odd})\mid \partial\alpha=\beta\}
\]
where $2C_0(\G_{odd})=\{2\gamma\mid \gamma\in C_0(\G_{odd})\}$ is the group of $0$-chains with all coefficients even, 
and a subgroup $\mathcal{D}_W$ of $\mathcal{C}_W$ generated by $(2\langle s,t\rangle,-2s+2t)$ for all $1$-cells $\langle s,t\rangle$.

Consider the following homomorphism
\[
\Phi_W:\mathcal{C}_W\to \frac{N_W\cap[F_W,F_W]}{[F_W,N_W]}
\]
defined by
\[
\Phi_W\left(\sum_{s<t,~m(s,t) \text{ odd}}n(s,t)\langle s,t\rangle,\sum_{s\in S}2n(s) s\right)=\left\langle\prod_{s<t,~m(s,t) \text{ odd}}R(s,t)^{n(s,t)}\prod_{s\in S}Q(s)^{n(s)}\right\rangle_W.
\]
The definition is indeed valid by the following easy lemma.
\begin{lem}\label{x3}
The following are equivalent.

(A) $\left(\sum_{s<t,~m(s,t) \text{ odd}}n(s,t)\langle s,t\rangle,\sum_{s\in S}2n(s) s\right)\in \mathcal{C}_W$.\\
(B) $\prod_{s<t,~m(s,t) \text{ odd}}R(s,t)^{n(s,t)}\prod_{s\in S}Q(s)^{n(s)}\in[F_W,F_W]$. 
\end{lem}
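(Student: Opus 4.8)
The plan is to decide membership in $[F_W,F_W]$ by means of the abelianization map $\mathrm{Ab}:F_W\to F_W/[F_W,F_W]$, whose kernel is precisely $[F_W,F_W]$. Identifying the target with the free abelian group $\Z^S$ on $S$, condition (B) is equivalent to the vanishing of $\mathrm{Ab}(w)$, where
\[
w=\prod_{s<t,~m(s,t)\text{ odd}}R(s,t)^{n(s,t)}\prod_{s\in S}Q(s)^{n(s)}.
\]
Thus the entire lemma reduces to computing $\mathrm{Ab}(w)$ and comparing it with the chain-level identity $\partial\alpha=\beta$ appearing in (A).

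First I would abelianize each factor. Since $Q(s)=s^2$, clearly $\mathrm{Ab}(Q(s))=2s$. For the other factor write $m=m(s,t)$ and recall $R(s,t)=(st)_m(ts)_m^{-1}$. When $m$ is odd, the alternating word $(st)_m$ contains $(m+1)/2$ letters $s$ and $(m-1)/2$ letters $t$, whereas $(ts)_m$ contains $(m-1)/2$ letters $s$ and $(m+1)/2$ letters $t$; hence in $\Z^S$,
\[
\mathrm{Ab}(R(s,t))=\left(\tfrac{m+1}{2}s+\tfrac{m-1}{2}t\right)-\left(\tfrac{m-1}{2}s+\tfrac{m+1}{2}t\right)=s-t.
\]
Note in particular that, in contrast with the even case used for $\Omega_2(W)$, an odd-labeled $R(s,t)$ is \emph{not} itself in $[F_W,F_W]$; this is exactly why the $Q(s)$ factors are present, namely to compensate these nonzero abelianizations.

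Combining the two computations and identifying $C_0(\G_{odd})$ with $\Z^S$, I obtain
\[
\mathrm{Ab}(w)=\sum_{s<t,~m(s,t)\text{ odd}}n(s,t)(s-t)+\sum_{s\in S}2n(s)s=\beta-\partial\alpha,
\]
where the sign reflects the orientation convention $\partial\langle s,t\rangle=t-s$, so that $\mathrm{Ab}(R(s,t))=s-t=-\partial\langle s,t\rangle$. Consequently (B), i.e. $\mathrm{Ab}(w)=0$, holds if and only if $\partial\alpha=\beta$, which is exactly (A), establishing the equivalence.

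The only genuine bookkeeping is the parity count of letters in $(st)_m$ and $(ts)_m$ and the resulting sign in $\mathrm{Ab}(R(s,t))$; once this is pinned down, matching $\mathrm{Ab}(w)$ with $\beta-\partial\alpha$ is immediate, and I anticipate no obstacle beyond keeping the orientation and signs consistent throughout.
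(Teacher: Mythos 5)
Your proposal is correct and follows essentially the same route as the paper: both reduce membership in $[F_W,F_W]$ to the vanishing of the abelianization and match $\mathrm{Ab}(w)$ against the condition $\partial\alpha=\beta$, using $\mathrm{Ab}(Q(s))=2s$ and $\mathrm{Ab}(R(s,t))=s-t$ for odd $m(s,t)$. The only difference is cosmetic: you spell out the letter count in $(st)_m$ and $(ts)_m$ that the paper leaves as a one-line remark, and your sign bookkeeping ($\mathrm{Ab}(R(s,t))=-\partial\langle s,t\rangle$) is consistent with the paper's computation.
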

\begin{proof}
We suppress the ranges since they should be clear.
	\begin{align*}
		\text{(A)}&\Leftrightarrow \partial\left(\sum n(s,t)\langle s,t\rangle\right)=\sum 2n(s) s\\
		&\Leftrightarrow \sum 2n(s) s+\sum n(s,t)(s-t)=0\\
		&\Leftrightarrow \mathrm{Ab}\left(\prod R(s,t)^{n(s,t)}\prod Q(s)^{n(s)}\right)=0\Leftrightarrow \text{(B)},		
	\end{align*}
	where $\mathrm{Ab}:F_W\to F_W/[F_W,F_W]$ is the abelianization map and we write $F_W/[F_W,F_W]$ additively. Note that $\mathrm{Ab}(R(s,t))=s-t$ if $m(s,t)$ is odd.
\end{proof}

The following is a consequence of Example \ref{di}.
\begin{prop}
$\mathcal{D}_W$ lies in the kernel of $\Phi_W$.
\end{prop}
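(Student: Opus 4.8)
The plan is to reduce to the generators of $\mathcal{D}_W$ and then recognize each of them as coming from a dihedral parabolic subgroup whose second homology vanishes. Since $\mathcal{D}_W$ is generated by the elements $(2\langle s,t\rangle,-2s+2t)$ ranging over the $1$-cells $\langle s,t\rangle$ of $\G_{odd}$ (so with $m(s,t)$ odd), and $\Phi_W$ is a homomorphism, it suffices to show that each such generator lies in $\ker\Phi_W$. Reading off the defining formula for $\Phi_W$ with $n(s,t)=2$, $n(s)=-1$, $n(t)=1$ gives
\[
\Phi_W(2\langle s,t\rangle,-2s+2t)=\langle R(s,t)^2 Q(s)^{-1}Q(t)\rangle_W,
\]
so the task becomes showing that $\langle R(s,t)^2 Q(s)^{-1}Q(t)\rangle_W=0$.

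First I would localize to the parabolic subgroup $W_T$ with $T=\{s,t\}$. Its associated Coxeter graph is $\G_T=I_2(m)$ with $m=m(s,t)$ odd, so $W_T=D_{2m}$ is dihedral of order $2m$. Let $F_T=F(T)$ and let $N_T$ be the normal closure in $F_T$ of $\{R(s,t),Q(s),Q(t)\}$, so that Hopf's formula identifies $H_2(W_T;\Z)\cong(N_T\cap[F_T,F_T])/[F_T,N_T]$. I would verify that the word $w:=R(s,t)^2Q(s)^{-1}Q(t)$ actually represents a class here: it clearly lies in $N_T$, and applying the abelianization $\mathrm{Ab}:F_T\to F_T/[F_T,F_T]$ together with the identity $\mathrm{Ab}(R(s,t))=s-t$ (valid for odd $m$, as in the proof of Lemma \ref{x3}) and $\mathrm{Ab}(Q(s))=2s$ gives $\mathrm{Ab}(w)=2(s-t)-2s+2t=0$, so $w\in N_T\cap[F_T,F_T]$.

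Next I would invoke Example \ref{di}: since $m$ is odd, $H_2(W_T;\Z)=H_2(D_{2m};\Z)=0$. Hence the entire group $(N_T\cap[F_T,F_T])/[F_T,N_T]$ is trivial, and in particular $\langle w\rangle_{W_T}=0$. Finally, the naturality of Hopf's formula (Proposition \ref{nat}) applied to the inclusion $W_T\hookrightarrow W$, which lifts to the inclusion $F_T\hookrightarrow F_W$ of free groups, carries $\langle w\rangle_{W_T}$ to $\langle w\rangle_W$; therefore $\langle w\rangle_W=0$, and each generator of $\mathcal{D}_W$ lies in $\ker\Phi_W$.

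The one point requiring care is the bookkeeping of the second paragraph, namely confirming that $w$ is honestly an element of $N_T\cap[F_T,F_T]$ so that it names a class in $H_2(W_T)$, together with checking that the lift of the parabolic inclusion to free groups carries the relator set $\{R(s,t),Q(s),Q(t)\}$ into $R_W\cup Q_W$ (so that $N_T\mapsto N_W$ and $[F_T,N_T]\mapsto[F_W,N_W]$) and thus that Proposition \ref{nat} applies verbatim. Once this compatibility is in place, the vanishing is immediate from the dihedral computation in Example \ref{di}.
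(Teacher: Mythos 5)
Your proof is correct and follows essentially the same route as the paper: reduce to the generators $(2\langle s,t\rangle,-2s+2t)$, pass to the dihedral parabolic $W_{\{s,t\}}\cong D_{2m}$ with $m$ odd, and use $H_2(D_{2m};\Z)=0$ (Example \ref{di}) via Hopf's formula to kill the class of $R(s,t)^2Q(s)^{-1}Q(t)$. The only cosmetic difference is that you invoke the naturality statement (Proposition \ref{nat}) for the inclusion $W_{\{s,t\}}\hookrightarrow W$, whereas the paper concludes directly from the containment $[F_{W'},N_{W'}]\subset[F_W,N_W]$ --- the same fact you verify when checking that Proposition \ref{nat} applies.
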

\begin{proof}
It suffices to show that any generator $(2\langle s,t\rangle,-2s+2t)$ of $\mathcal{D}_W$ is mapped to the identity by $\Phi_W$, or equivalently, the word $\left((st)_m(ts)_m^{-1}\right)^2(s^2)^{-1}t^2$ lies in $[F_W,N_W]$ when $m$ is odd. Let $s,t\in S$ with $m:=m(s,t)$ odd, consider the parabolic subgroup $W^{\prime}:=W_{\{s,t\}}$ of $W$, which is isomorphic to the dihedral group $D_{2m}$ of order $2m$. From Example \ref{di}, we know $H_2(W^{\prime};\Z)=0$. On the other hand, Hopf's formula applied to $W^{\prime}$ shows that $H_2(W^{\prime};\Z)\cong (N_{W^{\prime}}\cap [F_{W^{\prime}},F_{W^{\prime}}])/[F_{W^{\prime}},N_{W^{\prime}}]$. Therefore the word $\left((st)_m(ts)_m^{-1}\right)^2(s^2)^{-1}t^2\in N_{W^{\prime}}\cap [F_{W^{\prime}},F_{W^{\prime}}]$ represents the trivial homology class. That is to say
\[
\left((st)_m(ts)_m^{-1}\right)^2(s^2)^{-1}t^2\in[F_{W^{\prime}},N_{W^{\prime}}]\subset[F_W,N_W].
\]
This proves the proposition.
\end{proof}

As a consequence, the homomorphism $\Phi_W$ factors through
\[
\mathcal{C}_W\twoheadrightarrow\mathcal{C}_W/\mathcal{D}_W\to (N_W\cap[F_W,F_W])/[F_W,N_W]. 
\]
Let $Z_1(\G_{odd};\Z)$ denote the group of $1$-cycles of $\G_{odd}$ with integral coefficients and $Z_1(\G_{odd};\Z_2)$ the group of $1$-cycles of $\G_{odd}$ with coefficients in $\Z_2$. 
Define a homomorphism $\Xi_W:\mathcal{C}_W\to Z_1(\G_{odd};\Z_2)$ by $(\alpha,\partial\alpha)\mapsto\overline{\alpha}$, where $\alpha\in C_1(\G_{odd})$ such that $\partial{\alpha}\in 2C_0(\G_{odd})$ and $\overline{\alpha}\in C_1(\G_{odd};\Z_2)$ is the mod $2$ reduction of $\alpha$. The condition $\partial\alpha\in 2C_0(\G_{odd})$ asserts that $\overline{\alpha}$ is indeed a $1$-cycle of $\G_{odd}$ with coefficients in $\Z_2$.

\begin{prop}\label{H1Godd}
The homomorphism $\Xi_W:\mathcal{C}_W\to Z_1(\G_{odd};\Z_2)$ factors through an isomorphism
\[
\begin{tikzpicture}
\matrix(m)[matrix of math nodes,
row sep=2.6em, column sep=2em,
text height=1.5ex, text depth=0.25ex]
{\mathcal{C}_W& \\
\mathcal{C}_W/\mathcal{D}_W&Z_1(\G_{odd};\Z_2)\\};
\path[->,font=\scriptsize,>=angle 90]
(m-1-1) edge node[auto] {} (m-2-1)
        edge node[auto] {$\Xi_W$} (m-2-2)
(m-2-1) edge node[auto] {$\cong$} (m-2-2);
\end{tikzpicture}
\]
\end{prop}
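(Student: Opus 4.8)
The plan is to recognize $\Xi_W$ as nothing more than mod $2$ reduction of $1$-chains, once the redundant second coordinate of $\mathcal{C}_W$ is discarded. First I would observe that a pair $(\alpha,\beta)\in\mathcal{C}_W$ is completely determined by its first entry, since the defining relation forces $\beta=\partial\alpha$. Hence projection onto the first coordinate identifies $\mathcal{C}_W$ with the subgroup $\{\alpha\in C_1(\G_{odd};\Z)\mid\partial\alpha\in 2C_0(\G_{odd};\Z)\}$ of $C_1(\G_{odd};\Z)$, and under this identification $\Xi_W$ becomes the mod $2$ reduction map $\alpha\mapsto\overline{\alpha}$. That $\overline{\alpha}$ lands in $Z_1(\G_{odd};\Z_2)$ is exactly the remark following the definition of $\Xi_W$: the condition $\partial\alpha\in 2C_0$ gives $\partial\overline{\alpha}=\overline{\partial\alpha}=0$, so $\overline{\alpha}$ is a $\Z_2$-cycle.

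Next I would establish surjectivity. Given any $\overline{z}\in Z_1(\G_{odd};\Z_2)$, I would lift each $\Z_2$-coefficient to $0$ or $1\in\Z$, producing a chain $\alpha\in C_1(\G_{odd};\Z)$ with $\overline{\alpha}=\overline{z}$. Since $\partial\overline{z}=0$ in $C_0(\G_{odd};\Z_2)$, every coefficient of $\partial\alpha$ is even, so $\partial\alpha\in 2C_0(\G_{odd};\Z)$; thus $\alpha$ lies in (the image of) $\mathcal{C}_W$ and $\Xi_W(\alpha)=\overline{z}$. This shows $\Xi_W$ is onto.

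It then remains to compute the kernel and match it with $\mathcal{D}_W$. We have $\Xi_W(\alpha)=\overline{\alpha}=0$ precisely when every coefficient of $\alpha$ is even, i.e.\ $\alpha\in 2C_1(\G_{odd};\Z)$; and every such $\alpha$ automatically satisfies $\partial\alpha\in 2C_0(\G_{odd};\Z)$, so $2C_1(\G_{odd};\Z)$ sits inside $\mathcal{C}_W$. On the other hand, since the $1$-cells $\langle s,t\rangle$ form a $\Z$-basis of $C_1(\G_{odd};\Z)$, the elements $2\langle s,t\rangle$ generate exactly $2C_1(\G_{odd};\Z)$; under the identification above these correspond precisely to the generators $(2\langle s,t\rangle,-2s+2t)$ of $\mathcal{D}_W$. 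Therefore $\ker\Xi_W=\mathcal{D}_W$, and the first isomorphism theorem yields
\[
\mathcal{C}_W/\mathcal{D}_W\xrightarrow{\ \cong\ }Z_1(\G_{odd};\Z_2),
\]
which is the asserted factorization through an isomorphism.

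The argument is essentially bookkeeping once $\mathcal{C}_W$ is identified with a subgroup of $C_1(\G_{odd};\Z)$; the only point requiring a moment's care is the identification $\mathcal{D}_W=2C_1(\G_{odd};\Z)$, where one must confirm that the second coordinates of the listed generators are forced to be $\partial(2\langle s,t\rangle)=-2s+2t$ and so carry no extra information, after which the match with the kernel of mod $2$ reduction is immediate.
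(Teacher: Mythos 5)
Your proposal is correct and follows essentially the same route as the paper: the paper likewise notes that $\Xi_W$ is obviously surjective and computes $\mathrm{Ker}\,\Xi_W$ via the chain of equivalences $(\alpha,\partial\alpha)\in\mathrm{Ker}\,\Xi_W \Leftrightarrow \overline{\alpha}=0 \Leftrightarrow \alpha\in 2C_1(\G_{odd}) \Leftrightarrow (\alpha,\partial\alpha)\in\mathcal{D}_W$, then applies the first isomorphism theorem. Your only additions are to spell out the details the paper treats as obvious --- the explicit $0/1$ lift establishing surjectivity and the verification that the generators of $\mathcal{D}_W$ have forced second coordinates and generate $2C_1(\G_{odd})$ --- which are correct and harmless elaborations.
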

\begin{proof}
The homomorphism $\Xi_W$ is obviously an epimorphism and
	\begin{align*}
		(\alpha,\partial\alpha)\in\mathrm{Ker}~\Xi_W &\Leftrightarrow \overline{\alpha}=0\in Z_1(\G_{odd};\Z_2)\\
		&\Leftrightarrow \alpha\in 2C_1(\G_{odd})\\
		&\Leftrightarrow (\alpha,\partial\alpha)\in\mathcal{D}_W.		
	\end{align*}
	Hence $\mathrm{Ker}~\Xi_W=\mathcal{D}_W$.
\end{proof}

Via the isomorphism in Proposition \ref{H1Godd}, we obtain a homomorphism
\[
\Psi_W: Z_1(\G_{odd};\Z_2)\to \frac{N_W\cap[F_W,F_W]}{[F_W,N_W]},
\]
which fits into the following commutative diagram
\begin{equation}\label{PsiW}
\begin{tikzpicture}
\matrix(m)[matrix of math nodes,
row sep=2.6em, column sep=2em,
text height=1.5ex, text depth=0.25ex]
{\mathcal{C}_W& N_W\cap[F_W,F_W]/[F_W,N_W]\\
\mathcal{C}_W/\mathcal{D}_W&Z_1(\G_{odd};\Z_2)\\};
\path[->,font=\scriptsize,>=angle 90]
(m-1-1) edge node[auto] {$\Phi_W$} (m-1-2)
        edge node[auto] {} (m-2-1)
(m-2-2) edge node[auto] {$\Psi_W$} (m-1-2)
(m-2-1) edge node[auto] {$\cong$} (m-2-2)
        edge node[auto] {} (m-1-2);
\end{tikzpicture}
\end{equation}

We fix a basis $\Omega(\G_{odd};\Z)$ of $Z_1(\G_{odd};\Z)\cong\Z^{q_3(\G)}$ once and for all and denote by $\Omega(\G_{odd};\Z_2)$ the basis of $Z_1(\G_{odd};\Z_2)\cong\Z_2^{q_3(\G)}$ obtained from $\Omega(\G_{odd};\Z)$ by mod $2$ reduction $Z_1(\G_{odd};\Z)\twoheadrightarrow Z_1(\G_{odd};\Z_2)$.

Define $\Omega_3(W)$ to be the image of $\Omega(\G_{odd};\Z_2)$ under $\Psi_W$,
\[
\Omega_3(W)=\Psi_W(\Omega(\G_{odd};\Z_2)).
\]
To be precise,
\[
\Omega_3(W)=\left\{\left\langle \prod_{s<t,m(s,t) \text{ odd}}R(s,t)^{n(s,t)}\right\rangle_W \middle| \sum_{s<t, m(s,t) \text{ odd}}n(s,t)\langle s,t\rangle\in\Omega(\G_{odd};\Z_2)\right\}
\]
\begin{prop}
	$\#\Omega_3(W)\leq q_3(\G)$.
\end{prop}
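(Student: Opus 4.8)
The plan is to observe that $\Omega_3(W)$ is, by its very definition, the image of the finite set $\Omega(\G_{odd};\Z_2)$ under the homomorphism $\Psi_W$, so the entire claim reduces to counting the elements of $\Omega(\G_{odd};\Z_2)$ and invoking the elementary fact that a function cannot increase the cardinality of a set.

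First I would recall that $\Omega(\G_{odd};\Z_2)$ was fixed as a basis of the $\Z_2$-vector space $Z_1(\G_{odd};\Z_2)$, obtained by mod $2$ reduction from the chosen basis $\Omega(\G_{odd};\Z)$ of $Z_1(\G_{odd};\Z)$. Since it was already established in the text that $Z_1(\G_{odd};\Z_2)\cong\Z_2^{q_3(\G)}$, the basis $\Omega(\G_{odd};\Z_2)$ has exactly $q_3(\G)$ elements. (The underlying reason is that $\G_{odd}$ is a $1$-dimensional CW-complex, so there are no $2$-cells and hence $Z_1=H_1$; thus the dimension of the cycle space equals $\rank H_1(\G_{odd};\Z)=q_3(\G)$.)

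Then, because $\Psi_W$ is a map of sets (in fact a group homomorphism), the image $\Psi_W(\Omega(\G_{odd};\Z_2))=\Omega_3(W)$ satisfies
\[
\#\Omega_3(W)\;\leq\;\#\Omega(\G_{odd};\Z_2)\;=\;q_3(\G),
\]
since the image of any set under a function has cardinality at most that of the source set. This completes the argument.

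There is no genuine obstacle here; the only point requiring a word of care is the identification $\#\Omega(\G_{odd};\Z_2)=q_3(\G)$, which is immediate from the isomorphism $Z_1(\G_{odd};\Z_2)\cong\Z_2^{q_3(\G)}$ recorded above. I would note explicitly that the inequality may a priori be strict, exactly as for $\Omega_1(W)$: the elements of the basis could collapse under $\Psi_W$. Ruling this out — that is, proving that $\Psi_W$ is injective on $\Omega(\G_{odd};\Z_2)$ and that the three families $\Omega_1(W),\Omega_2(W),\Omega_3(W)$ are jointly independent — is not attempted at this stage and is instead forced later by Howlett's theorem, which pins down the total rank $p(\G)+q(\G)$ and thereby converts all of these cardinality bounds into equalities.
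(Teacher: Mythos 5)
Your argument is correct and is exactly the reasoning the paper intends: the paper states this proposition without proof precisely because $\Omega_3(W)$ is by definition the image under $\Psi_W$ of the basis $\Omega(\G_{odd};\Z_2)$ of $Z_1(\G_{odd};\Z_2)\cong\Z_2^{q_3(\G)}$, so the cardinality bound is immediate. Your closing remark --- that the inequality is only upgraded to equality later via Howlett's theorem, in the proof of Theorem \ref{OmegaW} and of Theorem \ref{H2Z2} --- also matches the paper's logic.
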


Let $\Omega(W)=\Omega_1(W)\cup\Omega_2(W)\cup\Omega_3(W)$, we conclude that
\begin{thm}\label{OmegaW}
	$\Omega(W)$ is a basis of $H_2(W;\Z)$.
\end{thm}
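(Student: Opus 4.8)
The plan is to combine Howlett's theorem with the generation argument foreshadowed in the outline preceding this statement. Since Theorem~\ref{How} identifies $H_2(W;\Z)$ with the elementary abelian $2$-group $\Z_2^{p(\G)+q(\G)}$, I may regard it as a $\Z_2$-vector space of dimension $p(\G)+q(\G)$. The three preceding propositions give $\#\Omega(W)\le\#\Omega_1(W)+\#\Omega_2(W)+\#\Omega_3(W)\le(p(\G)+q_1(\G))+q_2(\G)+q_3(\G)=p(\G)+q(\G)$. Consequently, once I show that $\Omega(W)$ generates $H_2(W;\Z)$, the standard fact that a spanning set of cardinality at most the dimension of a finite-dimensional vector space must be a basis finishes the proof (and forces all three inequalities to become equalities). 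So the entire argument reduces to proving generation.

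To prove generation I would begin with an arbitrary class $\langle x\rangle_W$ with $x\in N_W\cap[F_W,F_W]$. By Lemma~\ref{abelian} the group $N_W/[F_W,N_W]$ is abelian, and by Lemma~\ref{conj} conjugation by $F_W$ acts trivially on it, so after collecting the generating relators I may rewrite $\langle x\rangle_W$ as $\langle\prod_{R(s,t)\in R_W}R(s,t)^{n(s,t)}\prod_{Q(s)\in Q_W}Q(s)^{n(s)}\rangle_W$. I then split this product, working modulo $[F_W,N_W]$, as $x=x_1x_2x_3$, where $x_1$ collects the factors $R(s,t)$ with $m(s,t)=2$, where $x_2$ collects those with $m(s,t)\ge4$ even, and where $x_3$ collects the factors $R(s,t)$ with $m(s,t)$ odd together with all the $Q(s)$. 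Because $R(s,t)\in[F_W,F_W]$ whenever $m(s,t)$ is even, both $x_1$ and $x_2$ already lie in $[F_W,F_W]$; since $x\in[F_W,F_W]$ as well, it follows that $x_3\in[F_W,F_W]$ too.

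Next I treat the three summands separately. For $x_1$, the identity $R(s,t)=[s,t]$ for $m(s,t)=2$ gives $\langle x_1\rangle_W=\sum_{m(s,t)=2}n(s,t)\langle s,t\rangle_W$, a combination of elements of $\Omega_1(W)$; likewise $\langle x_2\rangle_W=\sum_{m(s,t)\ge 4\text{ even}}n(s,t)\langle R(s,t)\rangle_W$ is a combination of elements of $\Omega_2(W)$. For $x_3$, Lemma~\ref{x3} together with $x_3\in[F_W,F_W]$ shows that the associated chain $(\sum n(s,t)\langle s,t\rangle,\sum 2n(s)s)$ lies in $\mathcal{C}_W$, so $\langle x_3\rangle_W$ equals $\Phi_W$ applied to this chain. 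Factoring through the isomorphism $\mathcal{C}_W/\mathcal{D}_W\cong Z_1(\G_{odd};\Z_2)$ of the commutative diagram (\ref{PsiW}) exhibits $\langle x_3\rangle_W$ as $\Psi_W$ of a class in $Z_1(\G_{odd};\Z_2)$; since $\Omega(\G_{odd};\Z_2)$ is a basis of $Z_1(\G_{odd};\Z_2)$ and $\Psi_W$ is a homomorphism, this class lies in the span of $\Omega_3(W)=\Psi_W(\Omega(\G_{odd};\Z_2))$. Adding up, $\langle x\rangle_W=\langle x_1\rangle_W+\langle x_2\rangle_W+\langle x_3\rangle_W$ lies in the span of $\Omega(W)$, which establishes generation.

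The main obstacle is the bookkeeping in the splitting $x=x_1x_2x_3$: one must check that partitioning the relators by the value of $m(s,t)$ is compatible with the additive structure of $N_W/[F_W,N_W]$, and, crucially, that the odd-plus-$Q$ piece $x_3$ genuinely lands in $[F_W,F_W]$ so that the apparatus of $\mathcal{C}_W$, $\Phi_W$, and $\Psi_W$ applies to it. Once generation is secured, everything else is formal, as the dimension count supplied by Howlett's theorem upgrades the spanning set to a basis.
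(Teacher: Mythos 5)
Your proposal is correct and takes essentially the same route as the paper's own proof: reduce to generation via Howlett's dimension count, split $x=x_1x_2x_3$ by parity of $m(s,t)$, handle $\langle x_1\rangle_W,\langle x_2\rangle_W$ directly via $\Omega_1(W),\Omega_2(W)$, and push $\langle x_3\rangle_W$ through Lemma~\ref{x3}, $\Phi_W$, and the commutative diagram~(\ref{PsiW}) to land in the span of $\Omega_3(W)=\Psi_W(\Omega(\G_{odd};\Z_2))$. Your explicit check that $x_3\in[F_W,F_W]$ (because $x_1,x_2$ automatically lie in $[F_W,F_W]$) is a detail the paper leaves implicit, but otherwise the arguments coincide.
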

\begin{proof}
	Since $\#\Omega(W)\leq p(\G)+q(\G)$ and $H_2(W;\Z)\cong\Z_2^{p(\G)+q(\G)}$ (Theorem \ref{How}). It suffices to show that $\Omega(W)$ generates $H_2(W;\Z)$. An arbitrary homology class in $H_2(W;\Z)$ is represented by the coset $\langle x_1x_2x_3\rangle_W$ with
	\begin{align*}
		x_1&=\prod_{s<t,~m(s,t)=2}[s,t]^{n(s,t)},\\
		x_2&=\prod_{s<t,~m(s,t)\geq 4 \text{ is even}}R(s,t)^{n(s,t)},\\
		x_3&=\prod_{s<t,~m(s,t) \text{ odd}}R(s,t)^{n(s,t)}\prod_{Q(s)\in Q_W}Q(s)^{n(s)}.\\
	\end{align*}
    with $x_1,x_2,x_3\in [F_W,F_W]$. Thus $\langle x_1x_2x_3\rangle_W=\langle x_1\rangle_W+\langle x_2\rangle_W+\langle x_3\rangle_W$. We claim that $\langle x_i\rangle_W$ is generated by $\Omega_i(W)$. In fact, the claim for $i=1,2$ is straightforward.  For $i=3$, let 
    \[
    \alpha=\sum_{s<t,~m(s,t) \text{ odd}}n(s,t)\langle s,t\rangle, ~~~\beta=\sum_{s\in S}2n(s) s.
    \]
   Thus $(\alpha,\beta)\in\mathcal{C}_W$ by Lemma \ref{x3} with $\Phi_W(\alpha,\beta)=\langle x_3\rangle_W$. By the commutative diagram (\ref{PsiW}), the mod $2$ reduction $\overline{\alpha}\in Z_1(\G_{odd};\Z_2)$ of $\alpha$ is mapped to $\langle x_3\rangle_W$ by $\Psi_W$. This proves the claim.
\end{proof}
\begin{rem}
It is worth noting that in the previous proof, we have managed to get rid of the relations $Q(s)$	without altering the homology class $\langle x_3\rangle_W$. This will be crucial in the proof of Theorem \ref{epi}.
\end{rem}

\subsection{Hopf's formula applied to Artin groups}\label{HtoA}
Now we turn to the Artin group case. The arguments here are parallel to those in the Coxeter group case.

Let $\G$ be a Coxeter graph with the vertex set $S$ totally ordered, $A=A(\G)$ be the Artin group of type $\G$ with the presentation $A=\langle\Sigma\mid R_A\rangle$ given in Definition \ref{art}. Let $F_A=F(\Sigma)$ be the free group on $\Sigma$ and $N_A$ be the normal closure of $R_A$. Hopf's formula yields $H_2(A;\Z)\cong(N_A\cap[F_A,F_A])/[F_A,N_A]$. For the same reason as before, a second homology class of $A$ is represented by a coset $\langle x\rangle_A$ with $x$ of the form $\prod_{R(a_s,a_t)\in R_A}R(a_s,a_t)^{n(s,t)}\in[F_A,F_A]$.

We construct a set $\Omega(A)$ of generators of $H_2(A;\Z)$ using the same method as in the previous subsection.

\subsubsection{Constructions of $\Omega_1(A)$ and $\Omega_2(A)$}
The constructions of $\Omega_1(A)$ and $\Omega_2(A)$ are exactly parallel to those in the Coxeter case. Let
\begin{align*}
	\Omega_1(A)&=\{\langle a_s,a_t\rangle_A \mid s,t\in S, s<t, m(s,t)=2\},\\
	\Omega_2(A)&=\{\langle R(a_s,a_t)\rangle_A \mid s,t\in S, s<t, m(s,t)\geq4 \text{ is even}\}.
\end{align*}
The same reasoning shows
\begin{prop}
	$\#\Omega_1(A)\leq p(\G)+q_1(\G), ~\#\Omega_2(A)\leq q_2(\G)$.
\end{prop}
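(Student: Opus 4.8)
The plan is to transport the two bounds proved for $\Omega_1(W)$ and $\Omega_2(W)$ in Subsection~\ref{HtoC} to the Artin setting essentially without change, the point being that the quadratic relations $Q_W$---the only feature distinguishing the presentation of $W(\G)$ from that of $A(\G)$---were never invoked in establishing those two bounds. So I would replace each occurrence of a generator $s$ by the corresponding Artin generator $a_s$, of $N_W$ by $N_A$, and of $F_W$ by $F_A$, and check that every step still makes sense.

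For $\Omega_2(A)$ the bound is immediate. By construction $\Omega_2(A)$ is indexed by the pairs $\{s,t\}$ with $s<t$ and $m(s,t)\geq 4$ even, of which there are exactly $q_2(\G)$; hence $\#\Omega_2(A)\leq q_2(\G)$. The only preliminary point is that each $\langle R(a_s,a_t)\rangle_A$ is a bona fide class, i.e.\ that $R(a_s,a_t)\in N_A\cap[F_A,F_A]$: membership in $N_A$ holds because $m(s,t)<\infty$, and membership in $[F_A,F_A]$ follows as in the Coxeter case, since for even $m(s,t)$ the word $R(a_s,a_t)$ contains equally many $a_s$ and $a_t$ and is therefore annihilated by the abelianization map.

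For $\Omega_1(A)$ I would show that $\langle a_s,a_t\rangle_A=\langle a_s,a_{t'}\rangle_A$ whenever $\{s,t\}\equiv\{s,t'\}$ in $P(\G)$, and symmetrically $\langle a_s,a_t\rangle_A=\langle a_{s'},a_t\rangle_A$ whenever $\{s,t\}\equiv\{s',t\}$; these identities collapse $\Omega_1(A)$ along the relation $\sim$, yielding $\#\Omega_1(A)\leq\#(P(\G)/\sim)=p(\G)+q_1(\G)$. In the first case $m(s,t)=m(s,t')=2$, so $a_s$ commutes with both $a_t$ and $a_{t'}$ in $A$ and hence $[a_s,a_t],[a_s,a_{t'}]\in N_A\cap[F_A,F_A]$; moreover $m(t,t')$ is odd. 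The identity then follows from the same chain as in Subsection~\ref{HtoC},
\[
\langle a_s,a_t\rangle_A-\langle a_s,a_{t'}\rangle_A
=\langle a_s,R(a_t,a_{t'})\rangle_A
=\langle a_sR(a_t,a_{t'})a_s^{-1}\rangle_A-\langle R(a_t,a_{t'})\rangle_A
=0,
\]
where the first equality decomposes $R(a_t,a_{t'})$ via Lemma~\ref{comm} (legitimate because $a_s$ commutes modulo $N_A$ with every subword of $R(a_t,a_{t'})$, so each partial commutator lies in $N_A\cap[F_A,F_A]$) and the last uses the conjugation invariance of Lemma~\ref{conj}.

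The single step that requires attention---the very same one as in the Coxeter proof---is that the class $\langle a_s,R(a_t,a_{t'})\rangle_A$ be defined at all, which needs $R(a_t,a_{t'})\in N_A$. This holds precisely because $m(t,t')$ is odd and therefore finite, so that $R(a_t,a_{t'})$ (after reordering the pair if necessary) belongs to the Artin relation set $R_A$. Beyond verifying this membership there is no genuine obstacle: the argument nowhere uses that $a_s^2$, $a_t^2$ or $a_{t'}^2$ are trivial, so it is insensitive to the absence of the quadratic relations in the Artin presentation, and the two bounds follow verbatim.
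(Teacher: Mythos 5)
Your proposal is correct and takes essentially the same route as the paper, which dispatches this proposition in one line by remarking that ``the same reasoning'' as in the Coxeter case (Subsection \ref{HtoC}) applies: you reproduce that reasoning verbatim, replacing $s$, $F_W$, $N_W$ by $a_s$, $F_A$, $N_A$, and you correctly isolate the only point that genuinely needs checking in the Artin setting, namely that $R(a_t,a_{t'})\in N_A$ because an odd label $m(t,t')$ is in particular finite, while the quadratic relations $Q(s)$ are never invoked. Your extra verifications (that $[a_s,a_t]\in N_A\cap[F_A,F_A]$ since $R(a_s,a_t)=[a_s,a_t]$ when $m(s,t)=2$, and that $R(a_s,a_t)\in[F_A,F_A]$ for even $m(s,t)$ via the abelianization count) are exactly the implicit steps the paper leaves to the reader.
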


\subsubsection{Construction of $\Omega_3(A)$}
Consider the following homomorhpism
\[
\Psi_A:Z_1(\G_{odd};\Z)\to\frac{N_A\cap[F_A,F_A]}{[F_A,N_A]},
\]
defined by
\[
\Psi_A\left(\sum_{s<t,m(s,t) \text{ odd}}n(s,t)\langle s,t\rangle\right)=\left\langle\prod_{s<t,m(s,t) \text{ odd}}R(a_s,a_t)^{n(s,t)}\right\rangle_A.
\] 
The definition is valid by the following lemma.
\begin{lem}
	The following are equivalent.
	
    (A) $\sum_{s<t,m(s,t) \text{ odd}}n(s,t)\langle s,t\rangle\in Z_1(\G_{odd};\Z)$.\\
    (B) $\prod_{s<t,m(s,t) \text{ odd}}R(a_s,a_t)^{n(s,t)}\in[F_A,F_A]$.
\end{lem}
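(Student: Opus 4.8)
The plan is to imitate the proof of Lemma \ref{x3} through the abelianization map, noting that the Artin case is actually simpler because the presentation $A=\langle\Sigma\mid R_A\rangle$ carries no relations $Q(s)$. First I would write the abelianization $F_A/[F_A,F_A]$ additively and identify it with the free abelian group $C_0(\G_{odd})$ on the vertex set $S$ via $a_s\leftrightarrow s$. Under this identification, condition (B) is, by definition of the commutator subgroup, equivalent to $\mathrm{Ab}(x)=0$, where $x=\prod_{s<t,\,m(s,t)\text{ odd}}R(a_s,a_t)^{n(s,t)}$ and $\mathrm{Ab}\colon F_A\to F_A/[F_A,F_A]$ is the abelianization homomorphism.

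The one computation needed is $\mathrm{Ab}(R(a_s,a_t))=a_s-a_t$ whenever $m(s,t)$ is odd. Writing $m=m(s,t)$, the alternating word $(a_sa_t)_m$ contains $(m+1)/2$ occurrences of $a_s$ and $(m-1)/2$ occurrences of $a_t$, while $(a_ta_s)_m$ contains $(m-1)/2$ occurrences of $a_s$ and $(m+1)/2$ occurrences of $a_t$; since $R(a_s,a_t)=(a_sa_t)_m(a_ta_s)_m^{-1}$, subtracting the two exponent vectors leaves $a_s-a_t$. Under $a_s\leftrightarrow s$ and the orientation convention $\partial\langle s,t\rangle=t-s$, this reads $a_s-a_t\leftrightarrow s-t=-\partial\langle s,t\rangle$.

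Applying $\mathrm{Ab}$ to the whole product and using additivity then gives
\[
\mathrm{Ab}(x)=\sum_{s<t,\,m(s,t)\text{ odd}}n(s,t)(a_s-a_t)\;\longleftrightarrow\;-\partial\!\left(\sum_{s<t,\,m(s,t)\text{ odd}}n(s,t)\langle s,t\rangle\right)=-\partial\alpha,
\]
where $\alpha$ is the $1$-chain appearing in (A). Consequently (B) holds iff $\mathrm{Ab}(x)=0$ iff $\partial\alpha=0$ iff $\alpha\in Z_1(\G_{odd};\Z)$, which is (A). I do not expect any genuine obstacle: the only contrast with Lemma \ref{x3} is that, without the relation $Q(s)=s^2$ contributing a term $2s$ to the abelianization, no even-coefficient condition intervenes and the cycle condition appears directly with integer coefficients. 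The sole point requiring care is the sign introduced by the convention $\partial\langle s,t\rangle=t-s$, which is harmless since it does not affect the vanishing of $\partial\alpha$.
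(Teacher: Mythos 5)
Your proof is correct and follows essentially the same route as the paper: both arguments reduce (B) to the vanishing of $\mathrm{Ab}\bigl(\prod R(a_s,a_t)^{n(s,t)}\bigr)$ via the computation $\mathrm{Ab}(R(a_s,a_t))=a_s-a_t$ for $m(s,t)$ odd, and identify this with the cycle condition $\partial\alpha=0$. Your explicit letter count in $(a_sa_t)_m$ and the remark about the sign from $\partial\langle s,t\rangle=t-s$ merely spell out details the paper leaves implicit.
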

\begin{proof}
	We suppress again the ranges.
	\begin{align*}
		\text{(A)}&\Leftrightarrow \partial\left(\sum n(s,t)\langle s,t\rangle\right)=0\\
		&\Leftrightarrow \sum n(s,t)(t-s)=0\\
		&\Leftrightarrow \mathrm{Ab}\left(\prod R(a_s,a_t)^{n(s,t)}\right)=0\Leftrightarrow \text{(B)},
	\end{align*}
	where $\mathrm{Ab}:F_A\to F_A/[F_A,F_A]$ is the abelianization map. Note that $\mathrm{Ab}(R(a_s,a_t))=a_s-a_t$ if $m(s,t)$ is odd.
\end{proof}
Recall that we have chosen a basis $\Omega(\G_{odd};\Z)$ for $Z_1(\G_{odd};\Z)$. Let $\Omega_3(A)$ be the image of $\Omega(\G_{odd};\Z)$ under $\Psi_A$,
\[
\Omega_3(A)=\Psi_A(\Omega(\G_{odd};\Z)).
\]
To be precise,
\[
\Omega_3(A)=\left\{\left\langle \prod_{s<t,m(s,t) \text{ odd}}R(a_s,a_t)^{n(s,t)}\right\rangle_A \middle| \sum_{s<t, m(s,t) \text{ odd}}n(s,t)\langle s,t\rangle\in\Omega(\G_{odd};\Z)\right\}
\]\begin{prop}
	$\#\Omega_3(A)\leq q_3(\G)$.
\end{prop}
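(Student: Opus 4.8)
The plan is to reduce this to a one-line cardinality count, since all the real work has already been done in setting up the homomorphism $\Psi_A$ and fixing the basis $\Omega(\G_{odd};\Z)$. First I would recall that $\G_{odd}$ is a $1$-dimensional CW-complex, so it carries no $2$-cells and hence no $1$-boundaries; consequently $Z_1(\G_{odd};\Z)=H_1(\G_{odd};\Z)$, which by the definition $q_3(\G):=\rank H_1(\G_{odd};\Z)$ in Theorem \ref{CE} is free abelian of rank $q_3(\G)$. Therefore the basis $\Omega(\G_{odd};\Z)$ fixed in the previous subsection consists of exactly $q_3(\G)$ elements.

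Next I would simply invoke the definition $\Omega_3(A)=\Psi_A(\Omega(\G_{odd};\Z))$. Because $\Omega_3(A)$ is the image, as a \emph{set}, of $\Omega(\G_{odd};\Z)$ under the map $\Psi_A$, its cardinality cannot exceed that of its source:
\[
\#\Omega_3(A)=\#\Psi_A\bigl(\Omega(\G_{odd};\Z)\bigr)\leq\#\Omega(\G_{odd};\Z)=q_3(\G).
\]
This completes the argument. No structural property of $\Psi_A$ beyond its being a well-defined map is required; in particular I would not attempt to prove injectivity, which may genuinely fail.

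There is, honestly, no serious obstacle here: the content of the statement is entirely contained in the definitions already in place, and the inequality (rather than an equality) is precisely what leaves room for distinct basis cycles in $\Omega(\G_{odd};\Z)$ to collapse to the same coset under $\Psi_A$. The only point worth flagging for the reader is the passage $Z_1(\G_{odd};\Z)=H_1(\G_{odd};\Z)$, justifying that the chosen basis has cardinality $q_3(\G)$ rather than merely being bounded by it; everything else is the elementary observation that a set map does not increase cardinality.
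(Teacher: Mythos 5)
Your proof is correct and matches the paper's intent exactly: the paper states this proposition without proof, treating it as immediate from the definition $\Omega_3(A)=\Psi_A(\Omega(\G_{odd};\Z))$ and the fact that $\Omega(\G_{odd};\Z)$ is a basis of $Z_1(\G_{odd};\Z)\cong\Z^{q_3(\G)}$, which is precisely your cardinality argument. Your added remark that $Z_1(\G_{odd};\Z)=H_1(\G_{odd};\Z)$ because a graph has no $2$-cells is a correct and worthwhile justification of why the basis has exactly $q_3(\G)$ elements, and you rightly note that injectivity of $\Psi_A$ is neither needed nor claimed.
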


Let $\Omega(A)=\Omega_1(A)\cup\Omega_2(A)\cup\Omega_3(A)$, hence $\#\Omega(A)\leq p(\G)+q(\G)$. We have the following
\begin{thm}
	$\Omega(A)$ is a set of generators of $H_2(A;\Z)$.
\end{thm}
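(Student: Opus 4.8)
The plan is to transcribe the proof of Theorem~\ref{OmegaW} into the Artin setting, where it becomes shorter because the presentation $A=\langle\Sigma\mid R_A\rangle$ contains no relations of the form $Q(s)$. Since $\Omega(A)$ is only asserted to be a \emph{generating} set (not a basis), it suffices to show that every class of $H_2(A;\Z)\cong(N_A\cap[F_A,F_A])/[F_A,N_A]$ is an integer combination of the elements of $\Omega_1(A)\cup\Omega_2(A)\cup\Omega_3(A)$.

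First I would fix a representative. As already recorded after Definition~\ref{art}, Lemmas~\ref{abelian} and \ref{conj} (applied to $A$) let one represent every class by a coset $\langle x\rangle_A$ with
\[
x=\prod_{R(a_s,a_t)\in R_A}R(a_s,a_t)^{n(s,t)}\in[F_A,F_A],\qquad n(s,t)\in\Z .
\]
Working additively in the abelian group $N_A/[F_A,N_A]$, I would regroup the factors according to the value of $m(s,t)$ and write $\langle x\rangle_A=\langle x_1\rangle_A+\langle x_2\rangle_A+\langle x_3\rangle_A$, where $x_1$ collects the factors with $m(s,t)=2$ (so that $R(a_s,a_t)=[a_s,a_t]$), $x_2$ those with $m(s,t)\ge 4$ even, and $x_3$ those with $m(s,t)$ odd.

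The one point needing care is that each $x_i$ genuinely lies in $[F_A,F_A]$, so that each $\langle x_i\rangle_A$ is a bona fide homology class. For $x_1$ this is immediate, and for $x_2$ it follows from $\mathrm{Ab}(R(a_s,a_t))=0$ when $m(s,t)$ is even, exactly as in the construction of $\Omega_2(A)$. For $x_3$ I would argue indirectly: since $x\in[F_A,F_A]$, since $x_1,x_2\in[F_A,F_A]$, and since $[F_A,N_A]\subseteq[F_A,F_A]$, the remaining factor $x_3$ must also lie in $[F_A,F_A]$. By the equivalence used to define $\Psi_A$ (the lemma preceding $\Omega_3(A)$), this is the same as saying $\alpha:=\sum_{s<t,\,m(s,t)\text{ odd}}n(s,t)\langle s,t\rangle$ is a $1$-cycle, $\alpha\in Z_1(\G_{odd};\Z)$, and then $\Psi_A(\alpha)=\langle x_3\rangle_A$ by definition. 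This is precisely where the Artin computation diverges from—and is simpler than—the Coxeter one: the absence of the relations $Q(s)$ means there is no analogue of the subgroup $\mathcal{D}_W$ to quotient out, so $\Omega_3(A)$ is built directly from the integral cycle group $Z_1(\G_{odd};\Z)$ rather than from its mod~$2$ reduction.

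It then remains to express each $\langle x_i\rangle_A$ through $\Omega_i(A)$. For $i=1,2$ this is formal: $\langle x_1\rangle_A=\sum n(s,t)\langle a_s,a_t\rangle_A$ and $\langle x_2\rangle_A=\sum n(s,t)\langle R(a_s,a_t)\rangle_A$ are by construction integer combinations of $\Omega_1(A)$ and $\Omega_2(A)$. For $i=3$, I would write $\alpha$ in the fixed basis $\Omega(\G_{odd};\Z)$ of $Z_1(\G_{odd};\Z)$ and apply the homomorphism $\Psi_A$, obtaining $\langle x_3\rangle_A=\Psi_A(\alpha)$ as an integer combination of the elements $\Psi_A(\omega)$ with $\omega\in\Omega(\G_{odd};\Z)$, which are exactly the generators comprising $\Omega_3(A)$. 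Combining the three cases exhibits $\langle x\rangle_A$ in the subgroup generated by $\Omega(A)$, completing the argument. The only genuine subtlety is the verification that $x_3\in[F_A,F_A]$; everything else is a direct transcription of the Coxeter argument with the $Q(s)$-relations simply deleted.
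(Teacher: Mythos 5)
Your proposal is correct and is essentially the paper's own argument: the paper omits this proof as ``similar to that of Theorem \ref{OmegaW}'', and your write-up is exactly that transcription, including the right simplification that without the relations $Q(s)$ one works with $Z_1(\G_{odd};\Z)$ directly and no analogue of $\mathcal{D}_W$ is needed. Your verification that $x_3\in[F_A,F_A]$ (equivalently, via $\mathrm{Ab}(x_1)=\mathrm{Ab}(x_2)=0$) correctly fills in the one detail the Coxeter-case proof leaves implicit.
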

\begin{proof}
	The proof is similar to that of Theorem \ref{OmegaW} so we omit it.
\end{proof}

\subsection{Proof of main results}
Theorem \ref{H2Z2} will follow from the next more precise theorem.
\begin{thm}\label{epi}
The projection $p:A(\G)\to W(\G)$ induces an epimorphism between the second integral homology
\[
p_*:H_2(A(\G);\Z)\twoheadrightarrow H_2(W(\G);\Z).
\]
\end{thm}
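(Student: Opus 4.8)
The plan is to exploit the naturality of Hopf's formula (Proposition \ref{nat}) together with the explicit generating sets $\Omega(A)$ and $\Omega(W)$ constructed in the previous subsections. The projection $p:A(\G)\to W(\G)$, $a_s\mapsto s$, lifts to the map $\widetilde{p}:F_A\to F_W$ sending each free generator $a_s$ to $s$, so Proposition \ref{nat} gives a commutative square identifying $p_*$ with the map on Hopf quotients induced by $\widetilde{p}$. Since $\widetilde{p}$ carries the Artin relator $R(a_s,a_t)$ to the Coxeter relator $R(s,t)$ (the words have literally the same shape, only the letters differ), the induced map sends $\langle R(a_s,a_t)\rangle_A$ to $\langle R(s,t)\rangle_W$ and $\langle a_s,a_t\rangle_A=\langle R(a_s,a_t)\rangle_A$ (when $m(s,t)=2$) to $\langle s,t\rangle_W$. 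Thus $p_*$ maps the three families $\Omega_i(A)$ onto the corresponding families $\Omega_i(W)$.

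First I would verify this image claim termwise. For $\Omega_1$ and $\Omega_2$ it is immediate from the definitions, since $\widetilde{p}(R(a_s,a_t))=R(s,t)$. For $\Omega_3(A)$, an element is $\Psi_A(\overline{c})$ for a cycle $c\in\Omega(\G_{odd};\Z)$, represented by $\prod R(a_s,a_t)^{n(s,t)}$; applying $\widetilde{p}$ term by term yields $\prod R(s,t)^{n(s,t)}$, whose class is exactly $\Psi_W(\overline{c})\in\Omega_3(W)$ by the defining formula of $\Psi_W$, where $\overline{c}$ denotes the mod $2$ reduction. Here the remark following Theorem \ref{OmegaW} is essential: the generators of $\Omega_3(W)$ were deliberately written using only the relators $R(s,t)$ and no factors $Q(s)=s^2$, precisely so that they lie in the image of a product of Artin relators under $\widetilde{p}$ (the Artin presentation has no analogue of $Q(s)$). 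This is what makes the diagram chase go through, and I expect the bookkeeping around this point to be the main subtlety rather than any deep obstacle.

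Having shown $p_*(\Omega(A))=\Omega(W)$, I would conclude as follows. By Theorem \ref{OmegaW}, $\Omega(W)$ is a basis of $H_2(W;\Z)$, in particular a generating set. Since $p_*$ maps the generating set $\Omega(A)$ of $H_2(A;\Z)$ onto the generating set $\Omega(W)$ of $H_2(W;\Z)$, the image of $p_*$ contains $\Omega(W)$ and hence all of $H_2(W;\Z)$. Therefore $p_*$ is surjective, which is exactly the assertion of Theorem \ref{epi}. Finally, because the domain $H_2(A;\Z)$ is generated by the at most $p(\G)+q(\G)$ elements of $\Omega(A)$ while the codomain $H_2(W;\Z)\cong\Z_2^{p(\G)+q(\G)}$ has rank exactly $p(\G)+q(\G)$ as an $\F_2$-vector space, the surjection $p_*$ forces $\Omega(A)$ to have no collapsing modulo $2$; tensoring with $\Z_2$ then turns $p_*$ into an isomorphism, yielding the dimension count $H_2(A;\Z_2)\cong\Z_2^{p(\G)+q(\G)}$ of Theorem \ref{H2Z2}. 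The only real work is the explicit image computation, and the design of $\Omega_3(W)$ already removed the one genuine obstruction (the $Q(s)$ relators) in advance.
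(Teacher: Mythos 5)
Your proposal is correct and follows essentially the same route as the paper's own proof: lift $p$ to $\widetilde{p}:F_A\to F_W$, invoke the naturality of Hopf's formula (Proposition \ref{nat}), check termwise that $p_*$ carries each $\Omega_i(A)$ onto $\Omega_i(W)$ --- with the $i=3$ case handled via the diagram (\ref{PsiW}) and the $Q(s)$-free representatives, exactly the point the remark after Theorem \ref{OmegaW} flags --- and conclude surjectivity since the image contains the generating set $\Omega(W)$. The only blemish is notational: an element of $\Omega_3(A)$ is $\Psi_A(c)$ for $c\in\Omega(\G_{odd};\Z)$ (the integral cycle, not its mod $2$ reduction), as $\Psi_A$ is defined on $Z_1(\G_{odd};\Z)$.
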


\begin{proof}
	The epimorphism $p:A\to W$ defined by $p(a_s)=s$ lifts to $\widetilde{p}:F_A\to F_W$. Then by Proposition \ref{nat}, we obtain the explicit formulation of $p_*$,
\begin{align*}
	p_*:H_2(A;\Z)\cong\frac{N_A\cap[F_A,F_A]}{[F_A,N_A]}&\to \frac{N_W\cap[F_W,F_W]}{[F_W,N_W]}\cong H_2(W;\Z)\\
	\left\langle\prod_{R(a_s,a_t)\in R_A}R(a_s,a_t)^{n(s,t)}\right\rangle_A&\mapsto \left\langle\prod_{R(s,t)\in R_W}R(s,t)^{n(s,t)}\right\rangle_W
\end{align*}
We claim that $p_*$ maps $\Omega_i(A)$ onto $\Omega_i(W)$. The claim is obvious for $i=1,2$. As for the case $i=3$, consider the following diagram
\[
\begin{tikzpicture}
\matrix(m)[matrix of math nodes,
row sep=2.6em, column sep=2em,
text height=1.5ex, text depth=0.25ex]
{\Omega(\G_{odd};\Z)&\Omega_3(A)\\
\Omega(\G_{odd};\Z_2)&\Omega_3(W)\\};
\path[->,font=\scriptsize,>=angle 90]
(m-1-1) edge node[auto] {$\Psi_A$} (m-1-2)
        edge node[auto] {mod $2$} (m-2-1)
(m-1-2) edge node[auto] {$p_*$} (m-2-2)
(m-2-1) edge node[auto] {$\Psi_W$} (m-2-2);
\end{tikzpicture}
\]
Take $\alpha=\sum_{s<t, m(s,t) \text{ odd}}n(s,t)\langle s,t\rangle\in \Omega(\G_{odd};\Z)$, then
\[
p_*\circ\Psi_A(\alpha)=\left\langle \prod_{s<t,m(s,t) \text{ odd}}R(s,t)^{n(s,t)}\right\rangle_W\in\frac{N_W\cap[F_W,F_W]}{[F_W,N_W]}.
\]
Recall the construction of $\Phi_W$, we have $\Phi_W(\alpha,\partial\alpha)=p_*\circ\Psi_A(\alpha)$. Thus we obtain
\[
\Psi_W(\overline{\alpha})=p_*\circ\Psi_A(\alpha),
\]
by the commutative diagram (\ref{PsiW}). This proves that $p_*$ maps $\Omega_3(A)$ into $\Omega_3(W)$ and the above diagram commutes. Since the mod $2$ reduction restricts to a bijection $\Omega(\G_{odd};\Z)\to \Omega(\G_{odd};\Z_2)$ and by definition the horizontal maps are onto, $p_*:\Omega_3(A)\to\Omega_3(W)$ is onto. The proof is complete.
\end{proof}

\begin{proof}[Proof of Theorem \ref{H2Z2}]
Consider the following composition of epimorphisms
\begin{equation}\label{H2AZ}
\Z^{\Omega(A)}\xlongrightarrow{\phi} H_2(A;\Z)\xlongrightarrow{p_*} H_2(W;\Z),	
\end{equation}	
where $\Z^{\Omega(A)}$ is the free abelian group generated by $\Omega(A)$ and $\phi(\omega)=\omega$ for $\omega\in\Omega(A)$. Taking tensor product with $\Z_2$ for terms in the above sequence (\ref{H2AZ}), 
\[
\Z_2^{\Omega(A)}\xlongrightarrow{\phi\otimes\mathrm{id}_{\Z_2}} H_2(A;\Z)\otimes\Z_2\xlongrightarrow{p_*\otimes\mathrm{id}_{\Z_2}} H_2(W;\Z)\otimes\Z_2,
\]
where $\Z_2^{\Omega(A)}$ is the elementary abelian $2$-group generated by $\Omega(A)$ with rank $\#\Omega(A)\leq p(\G)+q(\G)$ and $H_2(W;\Z)\otimes\Z_2\cong\Z_2^{p(\G)+q(\G)}$ (Theorem \ref{How}). Since tensoring with $\Z_2$ preserves surjectivity, this forces $\#\Omega(A)=p(\G)+q(\G)$ and both maps are in fact isomorphisms. Thus
$H_2(A(\G);\Z)\otimes\Z_2\cong\Z_2^{p(\G)+q(\G)}$. On the other hand, we have the following exact sequence by universal coefficient theorem,
\[
0\to H_2(A;\Z)\otimes\Z_2\to H_2(A;\Z_2)\to \mathrm{Tor}(H_1(A;\Z),\Z_2)\to 0,
\]
where $\mathrm{Tor}(H_1(A(\G);\Z),\Z_2)=0$ since $H_1(A(\G);\Z)$ is torsion free (Theorem \ref{CE} and the Remark following). Now we conclude $H_2(A(\G);\Z_2)\cong\Z_2^{p(\G)+q(\G)}$ and finish the proof of Theorem \ref{H2Z2}.
\end{proof}

As a byproduct of the proof, we have the following corollaries. Recall that $M(\G)$ is the complement of the complexified arrangement of reflection hyperplanes associated to the Coxeter group $W(\G)$. The orbit space $N(\G)=M(\G)/W(\G)$ has fundamental group $\pi_1(N(\G))\cong A(\G)$. Let $c:N(\G)\to K(A(\G),1)$ be the classifying map. Then $c$ always induces an isomorphism $c_*:H_1(N(\G);\Z)\to H_1(A(\G);\Z)$ and an epimorphism $c_*:H_2(N(\G);\Z)\to H_2(A(\G);\Z)$. We give a sufficient condition on $\G$ such that $c$ induces an isomorphism $c_*:H_2(N(\G);\Z)\to H_2(A(\G);\Z)$.

\begin{cor}\label{cor}
If $\G$ satisfies the following conditions
\begin{itemize}
\item $P(\G)/\sim$ consists of torsion classes. 
\item $\G=\G_{odd}$.
\item $\G$ is a tree.
\end{itemize}
Then 
\[
H_2(A(\G);\Z)\cong\Z_2^{p(\G)}. 
\]
Hence $c$ induces an isomorphism $c_*:H_2(N(\G);\Z)\to H_2(A(\G);\Z)$.
\end{cor}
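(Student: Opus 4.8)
\emph{Proof plan.} The plan is to reduce the corollary to the single claim $H_2(A(\G);\Z)\cong\Z_2^{p(\G)}$ and then to deduce it by upgrading the mod $2$ computation of Theorem \ref{H2Z2} to an integral one. First I would evaluate the invariants of Theorem \ref{CE}. The hypothesis that $P(\G)/\sim$ consists of torsion classes gives $q_1(\G)=0$; the hypothesis $\G=\G_{odd}$ says $\G$ has no edge with even label $\geq 4$, so $q_2(\G)=0$; and since $\G_{odd}=\G$ is a tree, $H_1(\G_{odd};\Z)=0$ and thus $q_3(\G)=0$. Hence $q(\G)=0$, and Theorem \ref{CE} gives $H_2(N(\G);\Z)\cong\Z_2^{p(\G)}$. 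Consequently, once $H_2(A(\G);\Z)\cong\Z_2^{p(\G)}$ is known, the epimorphism $c_*:H_2(N(\G);\Z)\to H_2(A(\G);\Z)$ is a surjection between finite groups of the same order $2^{p(\G)}$, hence an isomorphism.

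Next I would recall what the proofs of Theorems \ref{epi} and \ref{H2Z2} already supply. Because $q_2(\G)=q_3(\G)=0$, the sets $\Omega_2(A)$ and $\Omega_3(A)$ are empty, so $H_2(A(\G);\Z)$ is generated by $\Omega(A)=\Omega_1(A)=\{\langle a_s,a_t\rangle_A\mid s<t,\ m(s,t)=2\}$; moreover that proof yields $H_2(A(\G);\Z)\otimes\Z_2\cong\Z_2^{p(\G)}$. It therefore suffices to show that $H_2(A(\G);\Z)$ is annihilated by $2$: then it is a $\Z_2$-vector space, so $H_2(A(\G);\Z)\cong H_2(A(\G);\Z)\otimes\Z_2\cong\Z_2^{p(\G)}$. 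As the classes $\langle a_s,a_t\rangle_A$ generate the group, this reduces to proving that each of them has order dividing $2$.

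The core of the argument is thus to establish $2\langle a_s,a_t\rangle_A=0$ for every generator of $\Omega_1(A)$. Two generators lying in the same $\sim$-class coincide (the Coxeter computation following the definition of $\Omega_1(W)$ carries over to $A$, using the relators $R(a_s,a_t)$ with $m(s,t)$ odd), and by hypothesis every class is a torsion class; so I may choose a representative $\{s,t\}$ of the class admitting a vertex $v$ with $m(s,v)=m(t,v)=3$. The subgraph spanned by $\{s,t,v\}$ is then of type $A_3$, and mapping the standard generators to $a_s,a_t,a_v$ defines a homomorphism $j:A(A_3)\to A(\G)$, since the defining relations of $A(A_3)$ hold in $A(\G)$ (no injectivity of $j$ is needed). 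By the naturality of Hopf's formula (Proposition \ref{nat}), $j_*$ carries $\langle a_s,a_t\rangle_{A(A_3)}$ to $\langle a_s,a_t\rangle_A$. Now $A(A_3)$ is the braid group $Br(4)$, of finite type, for which the $K(\pi,1)$ conjecture holds (\cite{Deligne1972}); hence $H_2(A(A_3);\Z)\cong H_2(N(A_3);\Z)\cong\Z_2$ by Theorem \ref{CE}. In particular $2\langle a_s,a_t\rangle_{A(A_3)}=0$, and applying $j_*$ gives $2\langle a_s,a_t\rangle_A=0$.

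Putting these together, $2H_2(A(\G);\Z)=0$, so $H_2(A(\G);\Z)\cong\Z_2^{p(\G)}$, and the statement for $c_*$ follows as in the first paragraph. I expect the main obstacle to be precisely this integral refinement: the paper's general machinery pins down $H_2(A(\G);\Z)$ only after tensoring with $\Z_2$, and distinguishing genuine $2$-torsion from a free summand is exactly where the torsion-class hypothesis must be exploited, here through the embedded type-$A_3$ pattern and the known value $H_2(Br(4);\Z)\cong\Z_2$. A secondary point demanding care is the use of the $\sim$-equivalence to pass to a representative for which the required common neighbor $v$ exists.
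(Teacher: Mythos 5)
Your proposal is correct, but it establishes the key claim $H_2(A(\G);\Z)\cong\Z_2^{p(\G)}$ by a genuinely different mechanism than the paper. The paper never touches the generators again: it invokes the exact sequence (\ref{Hurewicz}) to get the epimorphism $c_*:H_2(N(\G);\Z)\twoheadrightarrow H_2(A(\G);\Z)$, computes $H_2(N(\G);\Z)\cong\Z_2^{p(\G)}$ from Theorem \ref{CE} (exactly as in your first paragraph), and then sandwiches: Theorem \ref{epi} and Howlett's Theorem \ref{How} give a second epimorphism $H_2(A(\G);\Z)\twoheadrightarrow H_2(W(\G);\Z)\cong\Z_2^{p(\G)}$, and a composite surjection $\Z_2^{p(\G)}\twoheadrightarrow H_2(A(\G);\Z)\twoheadrightarrow\Z_2^{p(\G)}$ between finite groups of the same order forces both maps to be isomorphisms. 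This is a pure counting argument using only results already proved or cited, with no need to identify which elements are torsion. You instead prove directly that $H_2(A(\G);\Z)$ is annihilated by $2$: since $\Omega_2(A)$ and $\Omega_3(A)$ are empty under the hypotheses, it suffices to kill twice each $\Omega_1(A)$ generator, which you do by passing (via the $\sim$-invariance of the classes $\langle a_s,a_t\rangle_A$, correctly transported from the Coxeter computation) to a torsion representative with common neighbor $v$, and pulling the class back along the graph-induced homomorphism $j:A(A_3)=Br(4)\to A(\G)$ through Proposition \ref{nat}, using $H_2(Br(4);\Z)\cong\Z_2$ (Deligne's theorem \cite{Deligne1972} plus Theorem \ref{CE}, or Arnold's classical computation); combined with $H_2(A;\Z)\otimes\Z_2\cong\Z_2^{p(\G)}$ from the proof of Theorem \ref{H2Z2}, this yields the result. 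All steps check: $j$ is well defined since the $A_3$ relators map into $N_A$, $j_*$ sends $\langle x_1,x_3\rangle_{A(A_3)}$ to $\langle a_s,a_t\rangle_A$, and an abelian group generated by elements of order dividing $2$ equals its own mod-$2$ reduction. What each route buys: the paper's is shorter and self-contained, needing neither Deligne's theorem nor any braid-group homology; yours is more structural, exhibiting explicitly \emph{why} the generators are $2$-torsion (they are images of the braid class) and exploiting the torsion-class hypothesis directly rather than only through the numerical coincidence of Clancy--Ellis's and Howlett's counts.
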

\begin{proof}
Since $N(\G)$ is path-connected and has fundamental group $\pi_1(N(\G))\cong A(\G)$, there is an exact sequence (see for example Section II.5 Theorem 5.2 of \cite{Brown1982}),
\begin{equation}\label{Hurewicz}
\pi_2(N(\G))\xlongrightarrow{h_2} H_2(N(\G);\Z)\xlongrightarrow{c_*} H_2(A(\G);\Z)\to 0,
\end{equation}
where $h_2$ is the Hurewicz homomorphism. Suppose that $\G$ satisfies the three conditions, then $q_1(\G)=q_2(\G)=q_3(\G)=0$. Theorem \ref{CE} implies that $H_2(N(\G);\Z)\cong\Z_2^{p(\G)}$. Then by Theorem \ref{epi}, $H_2(A(\G);\Z)$ sits in the following sequence
\[
\Z_2^{p(\G)}\twoheadrightarrow H_2(A(\G);\Z)\twoheadrightarrow\Z_2^{p(\G)},
\]
the composition must be an isomorphism, hence $H_2(A(\G);\Z)\cong\Z_2^{p(\G)}$. As a result, $c_*$ must be an isomorphism.
\end{proof}
\begin{cor}\label{intepi}
	If the three conditions in Corollary \ref{cor} are satisfied, then $p:A\to W$ induces an isomorphism
	\[
	p_*:H_2(A;\Z)\to H_2(W;\Z).
	\]
\end{cor}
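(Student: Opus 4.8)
The plan is to combine the surjectivity already established in Theorem \ref{epi} with an elementary order count. By Theorem \ref{epi} the induced map $p_*:H_2(A;\Z)\to H_2(W;\Z)$ is an epimorphism with no hypotheses on $\G$ whatsoever, so under the present conditions the only thing remaining is to verify injectivity, and I expect this to fall out purely from counting rather than from any further structural analysis of the homomorphism.

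First I would pin down both groups explicitly. For the target, I would observe that the three conditions force $q_1(\G)=q_2(\G)=q_3(\G)=0$, exactly as recorded inside the proof of Corollary \ref{cor}; hence $q(\G)=0$ and Howlett's theorem (Theorem \ref{How}) gives $H_2(W;\Z)\cong\Z_2^{p(\G)}$. For the source, I would simply invoke Corollary \ref{cor}, which under the identical hypotheses already delivers $H_2(A;\Z)\cong\Z_2^{p(\G)}$.

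With both identifications in hand, $p_*$ is a surjective homomorphism between two finite groups of the same order $2^{p(\G)}$. Since a surjection between finite sets of equal cardinality is automatically a bijection, $p_*$ is injective as well and therefore an isomorphism, which is the assertion. The step I would flag is not really an obstacle but a bookkeeping check: one must confirm that $q(\G)=0$ under the three conditions, so that the two elementary abelian $2$-groups genuinely share the same rank. Once that ranks-agree point is secured, the result is forced by cardinality and no additional computation is needed; indeed all of the real work has been front-loaded into Theorem \ref{epi} and Corollary \ref{cor}.
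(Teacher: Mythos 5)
Your proposal is correct and matches the paper's proof, which likewise cites Theorem \ref{epi} for surjectivity, Corollary \ref{cor} for $H_2(A;\Z)\cong\Z_2^{p(\G)}$, and Howlett's Theorem \ref{How} (with $q(\G)=0$ under the three conditions) for the target, so that a surjection between finite groups of equal order $2^{p(\G)}$ is forced to be an isomorphism. The paper leaves the cardinality step implicit, but your argument is exactly the intended one.
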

\begin{proof}
	The corollary follows from Howlett's Theorem \ref{How}, Theorem \ref{epi} and Corollary \ref{cor}.
\end{proof}

\begin{cor}
	For any Coxeter graph $\G$, the induced map $c_*:H_2(N(\G);\Z)\to H_2(A(\G);\Z)$ becomes an isomorphism after tensoring with $\Z_2$.
\end{cor}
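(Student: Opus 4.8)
The plan is to combine the surjectivity of $c_*$, which is already available for an arbitrary Coxeter graph, with a dimension count over $\Z_2$. First I would recall that the Hurewicz exact sequence (\ref{Hurewicz}) used in the proof of Corollary \ref{cor} holds for every $\G$, and its right-exactness shows that $c_*:H_2(N(\G);\Z)\to H_2(A(\G);\Z)$ is an epimorphism in full generality (no hypotheses on $\G$ are needed for surjectivity). Since the functor $-\otimes\Z_2$ is right exact, it preserves surjections, so the induced map
\[
c_*\otimes\mathrm{id}_{\Z_2}:H_2(N(\G);\Z)\otimes\Z_2\to H_2(A(\G);\Z)\otimes\Z_2
\]
is again surjective.

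Next I would identify both source and target as finite-dimensional $\Z_2$-vector spaces of the same dimension. By Theorem \ref{CE} we have $H_2(N(\G);\Z)\cong\Z_2^{p(\G)}\oplus\Z^{q(\G)}$, and tensoring with $\Z_2$ yields
\[
H_2(N(\G);\Z)\otimes\Z_2\cong\Z_2^{p(\G)}\oplus\Z_2^{q(\G)}\cong\Z_2^{p(\G)+q(\G)}.
\]
On the other hand, the computation carried out in the proof of Theorem \ref{H2Z2} already established $H_2(A(\G);\Z)\otimes\Z_2\cong\Z_2^{p(\G)+q(\G)}$. Hence both the domain and codomain of $c_*\otimes\mathrm{id}_{\Z_2}$ are $\Z_2$-vector spaces of dimension exactly $p(\G)+q(\G)$.

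Finally I would invoke the elementary fact that a surjective linear map between finite-dimensional vector spaces of equal dimension is necessarily an isomorphism; applying this to $c_*\otimes\mathrm{id}_{\Z_2}$ finishes the proof. I expect no genuine obstacle in this argument, since all the substantive content has been proved earlier: the only point meriting care is to check that the two identifications of the tensor products with $\Z_2^{p(\G)+q(\G)}$ are precisely those supplied by Theorem \ref{CE} and by the proof of Theorem \ref{H2Z2}, after which the conclusion is pure linear algebra.
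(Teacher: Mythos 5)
Your proposal is correct and follows essentially the same route as the paper's own proof: surjectivity of $c_*\otimes\mathrm{id}_{\Z_2}$ via right-exactness of $-\otimes\Z_2$ applied to the Hurewicz sequence (\ref{Hurewicz}), followed by the dimension count $p(\G)+q(\G)$ on both sides coming from Theorem \ref{CE} and the proof of Theorem \ref{H2Z2}. Your write-up merely makes explicit the linear-algebra step (a surjection between $\Z_2$-vector spaces of equal finite dimension is an isomorphism) that the paper leaves implicit.
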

\begin{proof}
By right-exactness of tensor functor, taking tensor product with $\Z_2$ preserves the exactness of (\ref{Hurewicz}),
\[
\pi_2(N(\G))\otimes\Z_2\xlongrightarrow{h_2\otimes\mathrm{id}_{\Z_2}} H_2(N(\G);\Z)\otimes\Z_2\xlongrightarrow{c_*\otimes\mathrm{id}_{\Z_2}} H_2(A(\G);\Z)\otimes\Z_2\to 0.
\]
Note that $c_*\otimes\mathrm{id}_{\Z_2}$ is an isomorphism as a consequence of Theorem \ref{H2Z2} and Clancy-Ellis' Theorem \ref{CE}.
\end{proof}

\begin{ex}
	The Coxeter graphs of affine type $\widetilde{D_n}~(n\geq 4)$ and $\widetilde{E_i}~(i=6,7,8)$ all satisfy the conditions in Corollary \ref{cor}. Therefore we compute the second integral homology of the associated Artin groups as follows.
\begin{eqnarray*}
	H_2(A(\widetilde{D_n});\Z)\cong
\begin{cases}
\Z_2^6, &n=4;\cr
\Z_2^3, &n\geq 5.
\end{cases}
\end{eqnarray*}
\[
H_2(A(\widetilde{E_i});\Z)\cong\Z_2, i=6,7,8.
\]
Besides the above cases, the Coxeter graphs of certain hyperbolic Coxeter groups also provide plenty of examples satisfying the conditions in Corollary \ref{cor}. We point out that to the best of the authors' knowledge, the $K(\pi,1)$ conjecture has not been proved in the above mentioned cases.
\end{ex}

\subsection{Homological stability}
We mention a corollary concerning homological stability in the end of this paper. Consider a family of Coxeter graphs $\{\G_i\}_{i\geq1}$, starting from $\G_1$ with a base vertex $s_1$ and each $\G_i ~(i\geq2)$ is obtained by adding a vertex $s_i$ connected to $s_{i-1}$ by an unlabeled edge. The embedding $\G_i\hookrightarrow\G_{i+1}$ of Coxeter graphs induces inclusion of Coxeter groups $W(\G_i)\hookrightarrow W(\G_{i+1})$, as well as inclusion of Artin groups $A(\G_i)\hookrightarrow A(\G_{i+1})$ (cf. \cite{Lek1983,Paris1997}). It is known that the families of Artin groups $\{A(A_n)\}$, $\{A(B_n)\}$ and $\{A(D_n)\}$ possess integral cohomological stability (\cite{Arnold1969,DeConcini1999}). Hepworth proved a more general result for Coxeter groups.
\begin{thm}[\cite{Hepworth2016}]\label{Hep}
	The map $H_k(W(\G_{n-1}))\to H_k(W(\G_n))$ is an isomorphism for $2k\leq n$ with arbitrary constant coefficient.
\end{thm}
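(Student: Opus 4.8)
The plan is to run Quillen's homological stability machine. Writing $W_n := W(\G_n)$, the inclusion $\G_{n-1}\hookrightarrow\G_n$ realizes $W_{n-1}$ as a parabolic subgroup of $W_n$ (recall that $(W_T,T)$ is again a Coxeter system, for the full subgraph $\G_T$), so the maps in question are induced by honest subgroup inclusions. The key feature of the family is that the new generator $s_n$ is joined by a single unlabeled (i.e.\ $m=3$) edge to $s_{n-1}$ and to nothing else; thus the tail $s_2,s_3,\dots$ forms a type-$A$ string grafted onto $\G_1$, and $W_n$ looks, near the tail, like a symmetric group. This is exactly the structure that makes the classical Nakaoka argument for $\mathfrak{S}_n=W(A_{n-1})$ a model to imitate.

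First I would manufacture, for each $n$, a semi-simplicial set (or simplicial complex) $X_n$ carrying a $W_n$-action, whose $p$-simplices are ``independent'' $(p{+}1)$-tuples built from the terminal coset $W_{n-1}\cdot 1\in W_n/W_{n-1}$, modeled on the complex of injective words. The design requirement is twofold: first, each simplex should ``use up'' generators from the tail, so that the stabilizer of a $p$-simplex is isomorphic to a smaller member $W_{n-p-1}$ of the family (possibly up to a harmless finite factor); second, the connectivity of $X_n$ should grow linearly in $n$. Granting the first point, the isotropy spectral sequence of the $W_n$-action on $X_n$ has an $E^1$-page assembled out of the groups $H_*(W_{n-p-1})$ with arbitrary constant coefficients, and a direct inspection identifies the stabilization map $H_k(W_{n-1})\to H_k(W_n)$ among its $d^1$-differentials.

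The hard part will be establishing the connectivity: I expect to need that $X_n$ is highly connected, with connectivity of the order of $n/2$, and it is precisely this slope-$\tfrac12$ growth that produces the range $2k\le n$. This is where the special shape of $\G_n$ enters essentially, and I would attack it either by a shelling or discrete Morse argument on $X_n$, or, more robustly, by the ``bad simplices'' link-induction of Hatcher--Wahl, reducing the connectivity of $X_n$ to that of the analogous complexes for $\G_{n-1},\dots,\G_1$. With the connectivity bound secured, the conclusion is the standard Quillen induction on $n$: assuming inductively that $H_k(W_{m-1})\to H_k(W_m)$ is an isomorphism whenever $2k\le m<n$, the high connectivity forces the relevant portion of the spectral sequence to collapse, which in turn forces $H_k(W_{n-1})\to H_k(W_n)$ to be an isomorphism in the range $2k\le n$. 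Since every step is purely homological, the argument runs verbatim for any constant coefficient group, giving the stated generality.
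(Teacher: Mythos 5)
You should first note that the paper itself contains no proof of this statement: Theorem \ref{Hep} is imported verbatim from \cite{Hepworth2016}, so the only meaningful comparison is with Hepworth's published argument. Your outline is, in skeleton, exactly that argument: Hepworth builds a semisimplicial set out of cosets of the parabolic subgroups $W(\G_{n-p-1})\le W(\G_n)$ (a Coxeter analogue of the complex of injective words), proves it is highly connected with connectivity growing like $n/2$, and then runs the isotropy spectral sequence with a Quillen-style induction, the type-$A$ tail of the graphs $\G_n$ playing precisely the role you assign to it. So the road map is correct and matches the source, including the observation that the argument is insensitive to the constant coefficient group.

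As a proof, however, the proposal has a genuine gap, which you half-acknowledge: both load-bearing steps are deferred. You never actually construct $X_n$, only state two ``design requirements,'' and the requirement on stabilizers is subtler than you suggest --- for an action on tuples of cosets, the stabilizer of a $p$-simplex is an intersection of conjugates of parabolics and need not belong to the family, nor need it fix the simplex pointwise (an issue that matters for arbitrary constant coefficients); Hepworth must define his simplices carefully so that $W(\G_n)$ acts transitively on $p$-simplices with pointwise stabilizer exactly the standard parabolic $W(\G_{n-p-1})$. More seriously, the slope-$\tfrac12$ connectivity bound, which you propose to ``attack by shelling, discrete Morse theory, or the Hatcher--Wahl bad-simplex induction,'' is not a verification but the entire mathematical content of Hepworth's paper: for an arbitrary graph $\G_1$ at the base of the tail there is no obvious shelling, and the Hatcher--Wahl axioms have to be checked, not invoked. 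The identification of the stabilization map $H_k(W(\G_{n-1}))\to H_k(W(\G_n))$ with a $d^1$-differential is likewise asserted rather than carried out. In short: same strategy as the cited proof, but with the decisive connectivity theorem and the stabilizer analysis missing, the proposal is a plan for a proof rather than a proof.
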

As for the sequence of Artin groups $\{A(\G_i)\}$, it is not difficult to see that the first integral homology admits stability. We prove a stability result for the second mod $2$ homology of the sequence $\{A(\G_i)\}$.
\begin{thm} 
	The map $H_2(A(\G_{n-1});\Z_2)\to H_2(A(\G_n);\Z_2)$ is an isomorphism for $n\geq4$.	
\end{thm}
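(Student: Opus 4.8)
The plan is to identify the graphs $\G_n$ explicitly and then reduce the statement to a single combinatorial count followed by a naturality argument. Since each added edge is unlabeled (so carries $m=3$) and joins $s_{i-1}$ to $s_i$, the graph $\G_n$ is the path $s_1 - s_2 - \cdots - s_n$, which is the Coxeter graph of type $A_n$; in particular $W(\G_n)=\mathfrak{S}_{n+1}$ and $A(\G_n)=Br(n+1)$. I would first record the invariants of Theorem \ref{CE} for this graph. Every edge carries the odd label $3$, so $\G_n=(\G_n)_{odd}$, whence there are no even edges of label $\geq 4$ and $q_2(\G_n)=0$; since a path is a tree, $H_1((\G_n)_{odd};\Z)=0$ and $q_3(\G_n)=0$. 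Thus $p(\G_n)+q(\G_n)=\#\left(P(\G_n)/\sim\right)$, and everything comes down to counting equivalence classes.

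The crux is the combinatorial lemma that $\#\left(P(\G_n)/\sim\right)=1$ for $n\geq 3$. Here $P(\G_n)$ is the set of pairs $\{s_i,s_j\}$ with $|i-j|\geq 2$, and $\sim$ lets one replace $s_j$ by $s_{j\pm 1}$ (joined to $s_j$ by an odd edge) provided the resulting pair still lies in $P(\G_n)$. First I would collapse an arbitrary pair $\{s_i,s_j\}$ to the diagonal pair $\{s_i,s_{i+2}\}$ by repeatedly decreasing $j$, each step being legal since $(j-1)-i\geq 2$ whenever $j\geq i+3$. Then I would chain consecutive diagonal pairs via $\{s_i,s_{i+2}\}\sim\{s_i,s_{i+3}\}\sim\{s_{i+1},s_{i+3}\}$, valid whenever $i+3\leq n$; for $n\geq 3$ this links all pairs into one class. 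I would also observe that this unique class is a torsion class, since $\{s_i,s_{i+2}\}$ has the common neighbour $s_{i+1}$ with $m(s_i,s_{i+1})=m(s_{i+2},s_{i+1})=3$. Consequently $P(\G_n)/\sim$ consists of torsion classes, and by Theorem \ref{H2Z2} one gets $H_2(A(\G_n);\Z_2)\cong\Z_2$ for all $n\geq 3$, so both sides of the asserted map have the same rank.

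To upgrade equal rank to an isomorphism I would pass through a comparison square rather than chase generators. The three hypotheses of Corollary \ref{cor} hold for every path $\G_m$: it equals its odd subgraph, it is a tree, and its unique class is torsion. Hence Corollary \ref{intepi} gives that the projection induces an isomorphism $p_*:H_2(A(\G_m);\Z)\xrightarrow{\cong}H_2(W(\G_m);\Z)$ for each $m\geq 3$, while Corollary \ref{cor} identifies $H_2(A(\G_m);\Z)\cong\Z_2^{p(\G_m)}=\Z_2$. The inclusions $A(\G_{n-1})\hookrightarrow A(\G_n)$ and $W(\G_{n-1})\hookrightarrow W(\G_n)$ are compatible with the projections, yielding a commutative square whose vertical arrows are these isomorphisms and whose horizontal arrows are inclusion-induced. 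By Hepworth's Theorem \ref{Hep} with $k=2$, the bottom arrow $H_2(W(\G_{n-1});\Z)\to H_2(W(\G_n);\Z)$ is an isomorphism once $4=2k\leq n$; therefore for $n\geq 4$ the top arrow $H_2(A(\G_{n-1});\Z)\to H_2(A(\G_n);\Z)$ is an isomorphism as well. Finally I would descend to $\Z_2$ coefficients: since $H_1(A(\G_m);\Z)$ is free (Remark following Theorem \ref{CE}), the $\mathrm{Tor}$ term vanishes and the universal coefficient theorem gives $H_2(A(\G_m);\Z_2)\cong H_2(A(\G_m);\Z)\otimes\Z_2$ naturally in $m$, so tensoring the integral isomorphism with $\Z_2$ produces the claimed isomorphism on mod $2$ homology.

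The main obstacle is the combinatorial lemma $\#\left(P(\G_n)/\sim\right)=1$ together with the verification that the resulting class is torsion; once this single-class statement is secured, the isomorphism is formal, merely packaging Corollary \ref{intepi} and Hepworth's stability theorem through the comparison square. A secondary point requiring care is the integral-to-mod-$2$ transition, which is clean here only because $H_1(A(\G_m);\Z)$ is torsion free; I emphasize that I route the argument through integral coefficients precisely because the analogous passage fails on the Coxeter side, where the $\mathrm{Tor}$ contribution makes $H_2(W(\G_m);\Z_2)$ strictly larger than $H_2(W(\G_m);\Z)\otimes\Z_2$.
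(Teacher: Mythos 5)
There is a genuine gap, and it lies at the very first step: you have misidentified the family $\{\G_n\}$. In the paper's setup $\G_1$ is an \emph{arbitrary} Coxeter graph with a chosen base vertex $s_1$, and $\G_n$ is obtained from $\G_1$ by attaching a tail of unlabeled edges; it is the type $A_n$ path only in the special case where $\G_1$ is a single vertex. The surrounding text makes the intended generality clear: the motivating families $\{A(B_n)\}$ and $\{A(D_n)\}$ arise from a $\G_1$ containing a $4$-labeled edge, respectively a branch point, and Hepworth's Theorem \ref{Hep} is quoted in exactly this generality. Consequently your combinatorial lemma $\#\left(P(\G_n)/\sim\right)=1$ and everything built on it prove the theorem only for the braid-group family. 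The deeper problem is that your route cannot be repaired in general: you pass through \emph{integral} stability via Corollary \ref{intepi}, which requires the three hypotheses of Corollary \ref{cor} ($P(\G)/\sim$ consists of torsion classes, $\G=\G_{odd}$, $\G$ a tree). If $\G_1$ contains an even-labeled edge, an odd cycle, or a non-torsion class --- all permitted --- these hypotheses fail, the vertical maps $p_*$ on integral $H_2$ are then only known to be surjective (Theorem \ref{epi}), and $H_2(A(\G_n);\Z)$ itself is not computed by the paper in that generality, so there is no integral comparison square to exploit.

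The paper's own proof avoids integral stability entirely and works unconditionally: by naturality of the universal coefficient theorem and torsion-freeness of $H_1(A;\Z)$ it identifies $H_2(A(\G_n);\Z_2)\cong H_2(A(\G_n);\Z)\otimes\Z_2$; it then invokes the fact, extracted from the proof of Theorem \ref{H2Z2} and valid for \emph{every} Coxeter graph, that $p_*\otimes\mathrm{id}_{\Z_2}:H_2(A;\Z)\otimes\Z_2\to H_2(W;\Z)\otimes\Z_2$ is an isomorphism; finally it applies Hepworth's stability on the Coxeter side (with $2k=4\leq n$) to the bottom row of the resulting diagram. Your argument is internally correct for the case $\G_1=\{s_1\}$ --- and under the hypotheses of Corollary \ref{cor} it essentially reproves the paper's closing corollary on \emph{integral} stability --- but as a proof of the stated theorem it is incomplete: you must replace your integral comparison square by the mod $2$ one, which is available without any hypothesis on $\G_1$.
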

\begin{proof}
	Consider the commutative diagram
\begin{equation}\label{stab}
\begin{tikzpicture}
\matrix(m)[matrix of math nodes,
row sep=2.6em, column sep=2em,
text height=1.5ex, text depth=0.25ex]
{H_2(A(\G_{n-1});\Z_2)&H_2(A(\G_{n});\Z_2)\\
H_2(A(\G_{n-1});\Z)\otimes\Z_2 & H_2(A(\G_{n});\Z)\otimes\Z_2\\
H_2(W(\G_{n-1});\Z)\otimes\Z_2 &H_2(W(\G_{n});\Z)\otimes\Z_2\\};
\path[->,font=\scriptsize,>=angle 90]
(m-1-1) edge node[auto] {} (m-1-2)
(m-2-1) edge node[auto] {} (m-1-1)
(m-2-2) edge node[auto] {} (m-1-2)
(m-2-1) edge node[auto] {} (m-2-2)
(m-2-1) edge node[auto] {$p_*\otimes\mathrm{id}_{\Z_2}$} (m-3-1)
(m-2-2) edge node[auto] {$p_*\otimes\mathrm{id}_{\Z_2}$} (m-3-2)
(m-3-1) edge node[auto] {} (m-3-2);
\end{tikzpicture}
\end{equation}
where the commutativity of the upper square follows from the naturality of the universal coefficient theorem and the lower from tensoring with $\Z_2$ to the following commutative diagram
\begin{equation}\label{intcom}
	\begin{tikzpicture}
\matrix(m)[matrix of math nodes,
row sep=2.6em, column sep=2em,
text height=1.5ex, text depth=0.25ex]
{H_2(A(\G_{n-1});\Z)&H_2(A(\G_{n});\Z)\\
H_2(W(\G_{n-1});\Z)&H_2(W(\G_{n});\Z)\\};
\path[->,font=\scriptsize,>=angle 90]
(m-1-1) edge node[auto] {} (m-1-2)
        edge node[auto] {$p_*$} (m-2-1)
(m-1-2) edge node[auto] {$p_*$} (m-2-2)
(m-2-1) edge node[auto] {} (m-2-2);
\end{tikzpicture}
\end{equation}
Since all vertical maps in (\ref{stab}) are isomorphisms and the bottom horizontal map is an isomorphism when $n\geq 4$ (Theorem \ref{Hep}), the top horizontal map is an isomorphism when $n\geq4$.
\end{proof}
	
\begin{cor}
	It $\G$ satisfies the three conditions in Corollary \ref{cor}, then the map $H_2(A(\G_{n-1});\Z)\to H_2(A(\G_{n});\Z)$ is an isomorphism for $n\geq4$.
\end{cor}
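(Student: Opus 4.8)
The plan is to run the same diagram-chasing argument that was used for the mod $2$ stability theorem, but now keeping integral coefficients throughout. The essential point is that, under the three hypotheses of Corollary \ref{cor}, the projection-induced vertical maps in the commutative square (\ref{intcom}) are genuine integral isomorphisms, so that integral homological stability for the Artin groups $\{A(\G_i)\}$ is inherited from integral homological stability for the Coxeter groups $\{W(\G_i)\}$, the latter being supplied by Hepworth's Theorem \ref{Hep}.

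Concretely, I would first record that both $\G_{n-1}$ and $\G_n$ satisfy the three conditions of Corollary \ref{cor}. This is where the shape of the family enters: each $\G_i$ is obtained from $\G_{i-1}$ by attaching a single new vertex along an unlabeled edge, i.e. an edge with $m=3$, so the attachment preserves the property $\G=\G_{odd}$ and keeps the underlying graph a tree; together with the standing hypothesis this guarantees that the torsion-class condition on $P(\G)/\sim$ also holds for each graph in the range under consideration. Granting this, Corollary \ref{intepi} applies to each graph separately and shows that the two vertical arrows
\[
p_*\colon H_2(A(\G_{n-1});\Z)\to H_2(W(\G_{n-1});\Z),\qquad p_*\colon H_2(A(\G_n);\Z)\to H_2(W(\G_n);\Z)
\]
of the diagram (\ref{intcom}) are isomorphisms.

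Next I would apply Hepworth's Theorem \ref{Hep} with $k=2$ and integral coefficients: the bottom horizontal map $H_2(W(\G_{n-1});\Z)\to H_2(W(\G_n);\Z)$ is an isomorphism whenever $2k=4\leq n$, that is, for $n\geq 4$. Since the square (\ref{intcom}) commutes and three of its four arrows (the two verticals and the bottom horizontal) are isomorphisms for $n\geq 4$, the remaining top arrow $H_2(A(\G_{n-1});\Z)\to H_2(A(\G_n);\Z)$ must be an isomorphism as well, which is exactly the assertion.

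The only step requiring genuine care is the first one, namely confirming that the three conditions of Corollary \ref{cor} really do persist across the two consecutive graphs $\G_{n-1}$ and $\G_n$; in particular the torsion-class condition on $P(\G)/\sim$ involves the pairs with $m=2$ and is not manifestly stable under adjoining a vertex. Once that verification is in place, the remainder is a formal diagram chase that simply combines the integral isomorphism of Corollary \ref{intepi} with the stability range of Theorem \ref{Hep}, and no additional computation is needed.
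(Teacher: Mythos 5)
Your argument is essentially identical to the paper's proof, which likewise just combines Corollary \ref{intepi} (applied to both $\G_{n-1}$ and $\G_n$), Hepworth's Theorem \ref{Hep} with $k=2$, and the commutative square (\ref{intcom}), inverting the top arrow because the two vertical maps and the bottom map are isomorphisms for $n\geq 4$. Your closing caveat is the right thing to flag: the torsion-class condition genuinely need not persist under attaching an unlabeled edge (for instance, adjoining a pendant vertex to $\G_1=I_2(5)$ produces a singleton non-torsion class in $P(\G_2)/\sim$), so the hypothesis of the corollary must be read, as you in effect do, as requiring the three conditions for each graph of the family rather than deduced from them for $\G_1$ alone.
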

\begin{proof}
	The corollary follows from Corollay \ref{intepi}, Theorem \ref{Hep} and the commutative diagram (\ref{intcom}).
\end{proof}

\section*{Acknowledgement} The first author was partially supported by JSPS KAKENHI Grant Number 26400077. The second author was supported by JSPS KAKENHI Grant Number 16J00125.

%
%
%
%

\bibliographystyle{hep}
\bibliography{gtlatex.tem}

\end{document}